\newtheorem{remark}{Remark}
\newtheorem{theorem}[remark]{Theorem}
\newtheorem{proposition}[remark]{Proposition}
\newtheorem{corollary}[remark]{Corollary}
\numberwithin{remark}{section}
\newcommand{\ter}{\operatorname{ter}}
\title{The $k$-metric dimension of a graph}
\author{Alejandro Estrada-Moreno$^{(1)}$, Juan A. Rodr\'{\i}guez-Vel\'{a}zquez$^{(1)}$,\\ and Ismael G. Yero$^{(2)}$
    \\
$^{(1)}${\small Departament d'Enginyeria Inform\`atica i Matem\`atiques,}\\
{\small Universitat Rovira i Virgili,}  {\small Av. Pa\"{\i}sos
Catalans 26, 43007 Tarragona, Spain.} \\{\small
alejandro.estrada\@@urv.cat, juanalberto.rodriguez\@@urv.cat}
\\
$^{(2)}${\small Departamento de Matem\'aticas, Escuela Polit\'ecnica Superior de Algeciras}\\
{\small Universidad de C\'adiz,} {\small
Av. Ram\'on Puyol s/n, 11202 Algeciras, Spain.} \\ {\small
ismael.gonzalez\@@uca.es}\\
}
\date{June 2, 2013  }
\begin{document}

\maketitle

\begin{abstract}
As a generalization of the concept of a metric basis,  this article introduces the notion of $k$-metric basis in graphs. Given a connected graph $G=(V,E)$, a set $S\subseteq V$ is said to be a $k$-metric generator for $G$ if the elements of any pair of different vertices of $G$ are distinguished by at least $k$ elements of $S$, {\em i.e.}, for any two different vertices $u,v\in V$, there exist at least $k$ vertices $w_1,w_2,\ldots,w_k\in S$ such that $d_G(u,w_i)\ne d_G(v,w_i)$ for every $i\in \{1,\ldots,k\}$. A metric generator of minimum cardinality is called a $k$-metric basis and its cardinality the $k$-metric dimension of $G$. A connected graph $G$ is  \emph{$k$-metric dimensional} if $k$ is the largest integer such that there exists a $k$-metric basis for $G$.   We give a necessary and sufficient condition for a graph to be $k$-metric dimensional and we obtain several results on the $k$-metric dimension.
\end{abstract}

{\it Keywords:} $k$-metric generator; $k$-metric dimension; $k$-metric dimensional graph; metric dimension; resolving set; locating set; metric basis.

{\it AMS Subject Classification Numbers:}   05C05; 05C12; 05C90.

\section{Introduction}

The problem of uniquely determining the location of an intruder in a network was the principal motivation of introducing the concept of metric dimension in graphs by Slater in \cite{Slater1975,Slater1988}, where the metric generators were called  locating sets. The concept of metric dimension of a graph was also introduced independently by Harary and Melter in \cite{Harary1976}, where metric generators were called resolving sets.

Nevertheless, the concept of a metric generator, in its primary version, has a weakness related with the possible uniqueness of the vertex identifying a pair of different vertices of the graph. Consider, for instance, some robots which are navigating, moving from node to node of a network. On a graph, however, there is neither the concept of direction nor that of visibility. We assume that robots have communication with a set of landmarks $S$ (a subset of nodes) which provide them the distance to the landmarks in order to facilitate the navigation. In this sense, one aim is that each robot is uniquely determined by the landmarks. Suppose that in a specific moment there are two robots $x,y$ whose positions are only distinguished by one landmark $s\in S$. If the communication between $x$ and $s$ is unexpectedly blocked, then the robot $x$ will get lost in the sense that it can assume that it has the position of $y$. So, for a more realistic settings it could be desirable to consider a set of landmarks where each pair of nodes is distinguished by at least two landmarks.

A natural solution regarding that weakness is the location of one landmark in every node of the graph. But, such a solution, would have a very high cost. Thus, the choice of a correct set of landmarks is convenient for a satisfiable performance of the navigation system. That is, in order to achieve a reasonable efficiency, it would be convenient to have a set of as few landmarks as possible, always having the guarantee that every object of the network will be properly distinguished.

From now on we consider a simple and connected graph $G=(V,E)$. It is said that a vertex $v\in V$ distinguishes two different vertices $x,y\in V$, if $d_G(v,x)\ne d_G(v,y)$, where $d_G(a,b)$ represents the length of a shortest $a-b$ path. A set $S\subseteq V$ is a \emph{metric generator} for $G$ if any pair of different vertices of $G$ is distinguished by some element of $S$. Such a name for $S$ raises from the concept of {\em generator} of metric spaces, that is, a set $S$ of points in the space with the property that every point of the space is uniquely determined by its ``distances'' from the elements of $S$. For our specific case, in a simple and connected graph $G=(V,E)$, we consider the metric $d_G:V\times V\rightarrow \mathbb{N}\cup \{0\}$, where $d_G(x,y)$ is defined as mentioned above and $\mathbb{N}$ is the set of positive integers. With this metric, $(V,d_G)$ is clearly a metric space. A metric generator of minimum cardinality is called a \emph{metric basis}, and its cardinality the \emph{metric dimension} of $G$, denoted by $\dim(G)$.

Other useful terminology to define the concept of a metric generator in graphs is given at next. Given an ordered set $S=\{s_{1}, s_{2}, \ldots, s_{d}\}\subset V(G)$, we refer to the $d$-vector (ordered $d$-tuple) $r(u|S)=$ $(d_{G}(u,s_{1}),$ $ d_{G}(u,s_{2}), \ldots, d_{G}(u,s_{d}))$ as the \emph{metric representation} of $u$ with respect to $S$. In this sense, $S$ is a metric generator for $G$ if and only if for every pair of different vertices $u,v$ of $G$, it follows $r(u|S)\neq r(v|S)$.

In order to avoid the weakness of metric basis described above, from now on we consider an extension of the concept of metric generators in the following way. Given a simple and connected graph $G=(V,E)$, a set $S\subseteq V$ is said to be a \emph{$k$-metric generator} for $G$ if and only if any pair of different vertices of $G$ is distinguished by at least $k$ elements of $S$, {\em i.e.}, for any pair of different vertices $u,v\in V$, there exist at least $k$ vertices $w_1,w_2,\ldots,w_k\in S$ such that
\begin{equation}\label{conditionDistinguish}
d_G(u,w_i)\ne d_G(v,w_i),\; \mbox{\rm for every}\; i\in \{1,\ldots,k\}.
\end{equation}
A $k$-metric generator of the minimum cardinality in $G$ will be  called a \emph{$k$-metric basis} and its cardinality the \emph{$k$-metric dimension} of $G$, which will be denoted by $\dim_{k}(G)$.

As an  example we take the cycle graph $C_{4}$ with vertex set $V=\{x_{1}, x_{2}, x_{3}, x_{4}\}$ and edge set $E=\{x_{i}x_{j}:j-i=1\pmod{2}\}$. We claim that $\dim_{2}(C_{4})=4$. That is, if we take the pair of vertices $x_{1}, x_{3}$, then they are distinguished only by themselves. So, $x_{1}, x_{3}$ must belong to every $2$-metric generator for $C_4$. Analogously, $x_{2},x_{4}$ also must belong to every $2$-metric generator for $C_4$. Other example is the graph $G$ in Figure \ref{figDim1}, for which $\dim_{2}(G)=4$. To see this, note that $v_{3}$ does not distinguish any pair of different vertices of $V(G)-\{v_{3}\}$ and for each pair $v_{i}, v_{3},$ $ 1\leq i\leq 5, i\not= 3$, there exist  two elements of $V(G)-\{v_{3}\}$ that distinguish them. Hence, $v_3$ does not belong to any $2$-metric basis for $G$.  To conclude  that  $V(G)-\{v_{3}\}$ must be a $2$-metric basis for $G$ we proceed as in the case of $C_{4}$.

\begin{figure}[!ht]
\centering
\begin{tikzpicture} [transform shape, inner sep = 1pt, outer sep = 0pt, minimum size = 20pt]
\node [draw=black, shape=circle, fill=white,text=black] (v1) at (0,2) {$v_1$};
\node [draw=black, shape=circle, fill=white,text=black] (v2) at (2,2) {$v_2$};
\node [draw=black, shape=circle, fill=white,text=black] (v3) at (3,1) {$v_3$};
\node [draw=black, shape=circle, fill=white,text=black] (v4) at (0,0) {$v_4$};
\node [draw=black, shape=circle, fill=white,text=black] (v5) at (2,0) {$v_5$};
\draw[black] (v1) -- (v2)  -- (v5) -- (v4) -- (v1);
\draw[black] (v1) -- (v3)  -- (v5);
\draw[black] (v2) -- (v3)  -- (v4);
\end{tikzpicture}
\caption{A graph $G$ where $V(G)-\{v_{3}\}$ is a $2$-metric basis for $G$.}
\label{figDim1}
\end{figure}
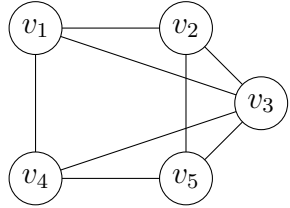

Note that every $k$-metric generator $S$ satisfies that $|S|\geq k$ and, if $k>1$, then $S$ is also a $(k-1)$-metric generator. Moreover,  $1$-metric generators
are the standard metric generators (resolving sets or locating sets as defined in \cite{Harary1976} or \cite{Slater1975}, respectively). Notice that if $k=1$, then the problem of checking if a set $S$ is a metric generator reduces to check condition (\ref{conditionDistinguish}) only for those vertices $u,v\in V- S$, as every vertex in $S$ is distinguished at least by itself. Also, if $k=2$, then condition (\ref{conditionDistinguish}) must be checked only for those pairs having at most one vertex in $S$, since two vertices of $S$ are distinguished at least by themselves. Nevertheless, if $k\ge 3$, then condition (\ref{conditionDistinguish}) must be checked for every pair of different vertices of the graph.

The literature about metric dimension in graphs shows several of its usefulness, for instance, applications to the navigation of robots in networks are discussed in \cite{Khuller1996} and applications to chemistry in \cite{Johnson1993,Johnson1998}, among others. This invariant was studied further in a number of other papers including \cite{Bailey2011a,Caceres2007,Chappell2008,Chartrand2000,Chartrand2000b,Fehr2006,Haynes2006,Okamoto2010,Saenpholphat2004,Tomescu2008,Yero2011,Yero2010}.  Several variations of metric generators including resolving dominating sets \cite{Brigham2003}, independent resolving sets \cite{Chartrand2000a}, local metric sets \cite{Okamoto2010}, and strong resolving sets \cite{Kuziak2013,Oellermann2007,Sebo2004}, etc. have been introduced and studied. It is therefore our goal to introduce this extension of metric generators in graphs as a possible future tool for other possibly more general variations of the applications described above.

We introduce now some other more necessary terminology for the article and the rest of necessary concepts will be introduced the first time they are mentioned in the work. 
We will use the notation $K_n$, $K_{r,s}$, $C_n$, $N_n$ and $P_n$ for complete graphs,  complete bipartite graphs, cycle graphs, empty graphs and path graphs, respectively. 
If two vertices $u,v$ are adjacent in $G=(V,E)$, then we write $u\sim v$ or we say that $uv\in E(G)$. Given  $x\in V(G)$ we define $N_{G}(x)$ to be the \emph{open neighbourhood} of $x$ in $G$. That is, $N_{G}(x)=\{y\in V(G):x\sim y\}$. The \emph{closed neighbourhood}, denoted by $N_{G}[x]$, equals $N_{G}(x)\cup \{x\}$. If there is no  ambiguity, we will simply write  $N(x)$ or $N[x]$. We also refer to the degree of $v$ as $\delta(v)=|N(v)|$. The minimum and maximum degrees of $G$ are denoted by $\delta(G)$ and $\Delta(G)$, respectively. For a non-empty set $S \subseteq V(G)$, and a vertex $v \in V(G)$, $N_S(v)$ denotes the set of neighbors that $v$ has in $S$, {\it i.e.}, $N_S(v) = S\cap N(v)$.

\section{$k$-metric dimensional graphs}

It is clear that it is not possible to find a $k$-metric generator in a connected graph $G$ for every integer $k$. 
That is, given a connected graph $G$, there exists an integer $t$ such that $G$ does not contain any $k$-metric generator for every $k>t$. According to that fact, a connected graph $G$ is said to be a \emph{$k$-metric dimensional graph}, if $k$ is the largest integer such that there exists a $k$-metric basis for $G$. Notice that, if $G$ is a $k$-metric dimensional graph, then for every positive integer $k'\le k$, $G$ has at least  a $k'$-metric basis. Since for every pair of different vertices $x,y$ of a graph $G$ we have that they are distinguished at least by themselves, it follows that the whole vertex set $V(G)$ is a $2$-metric generator for $G$ and, as a consequence it follows that every graph $G$ is $k$-metric dimensional for some $k\ge 2$. On the other hand, for any connected graph $G$ of order $n>2$ there exists at least one vertex $v\in V(G)$ such that $\delta(v)\ge 2$. Since $v$ does not distinguish any pair of different neighbours $x,y\in N_G(v)$,  there is no $n$-metric dimensional graph of order $n>2$.

\begin{remark}\label{remarkKMetric}
$\ $Let $G$ be a $k$-metric dimensional graph of order $n$. If $n\ge 3$, then $2\le k\le n-1$. Moreover, $G$ is $n$-metric dimensional if and only if $G\cong K_2$.
\end{remark}

Next we give a characterization of $k$-metric dimensional graphs. To do so, we need  some additional terminology. Given two different vertices $x,y\in V(G)$, we say that the set of \textit{distinctive vertices} of $x,y$ is $${\cal D}_G(x,y)=\{z\in V(G): d_{G}(x,z)\ne d_{G}(y,z)\}$$
and the set of \emph{non-trivial distinctive vertices} of  $x,y$ is $${\cal D}_G^*(x,y)={\cal D}_G(x,y)-\{x,y\}.$$

\begin{theorem}\label{theokmetric}
$\ $A connected graph  $G$ is $k$-metric dimensional  if and only if $k=\displaystyle\min_{x,y\in V(G), x\ne y}\vert {\cal D}_G(x,y)\vert .$
\end{theorem}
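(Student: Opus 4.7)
The plan is to denote $t := \displaystyle\min_{x,y \in V(G),\, x \ne y}|{\cal D}_G(x,y)|$ and to establish the two inequalities $k \le t$ and $k \ge t$ separately, from which $k = t$ follows. The whole argument rests on one bookkeeping observation: for any set $S \subseteq V(G)$ and any pair of distinct vertices $x,y$, the number of elements of $S$ that distinguish $x$ from $y$ equals exactly $|S \cap {\cal D}_G(x,y)|$. Consequently, $S$ is an $r$-metric generator if and only if $|S \cap {\cal D}_G(x,y)| \ge r$ for every pair of distinct vertices $x,y$. This reformulation is the engine of the proof.

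For the inequality $k \le t$, I would take a $k$-metric basis $S$, which exists since $G$ is $k$-metric dimensional. Let $x,y$ be any pair realizing the minimum defining $t$. By the observation above we have $|S \cap {\cal D}_G(x,y)| \ge k$, and since $S \cap {\cal D}_G(x,y) \subseteq {\cal D}_G(x,y)$, we conclude $t = |{\cal D}_G(x,y)| \ge |S \cap {\cal D}_G(x,y)| \ge k$.

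For the inequality $k \ge t$, I would choose $S = V(G)$. Then for every pair $x \ne y$ we have $|S \cap {\cal D}_G(x,y)| = |{\cal D}_G(x,y)| \ge t$ by the very definition of $t$, so $V(G)$ is itself a $t$-metric generator. Thus $G$ admits a $t$-metric basis, and since $k$ is by hypothesis the \emph{largest} integer for which a $k$-metric basis exists, $k \ge t$.

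Combining the two inequalities yields $k = t$, and both directions of the biconditional follow from the same chain. There is no serious technical obstacle here: the statement is essentially a repackaging of the definitions. The only conceptual step worth highlighting is recognizing that $V(G)$ itself is the "universal" distinguishing set, so the per-pair distinguishing strength of the best possible generator is capped precisely by the worst pair $(x,y)$'s distinctive set ${\cal D}_G(x,y)$; this is what forces the min over pairs to equal the maximum $k$.
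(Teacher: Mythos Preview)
Your proof is correct and rests on the same two observations as the paper's: that any $k$-metric basis must meet each ${\cal D}_G(x,y)$ in at least $k$ points (giving $k\le t$), and that the whole vertex set is a $t$-metric generator (giving $k\ge t$). The only difference is organizational. The paper splits the biconditional into necessity and sufficiency and, within the necessity half, proves $k\ge t$ by a contradiction argument---augmenting a $k$-metric basis $B$ with one extra distinctive vertex for each pair $x',y'$ with $|B\cap{\cal D}_G(x',y')|=k$ to manufacture a $(k+1)$-metric generator. You bypass this entirely by invoking $V(G)$ directly, which is exactly what the paper does in its sufficiency half; your argument is thus a cleaner merger of the paper's two directions that avoids the redundant augmentation step.
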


\begin{proof}
$\ $(Necessity) If $G$ is a $k$-metric dimensional graph, then for any $k$-metric basis $B$ and any pair of different vertices $x,y\in V(G)$, we have
$\vert B\cap {\cal D}_G(x,y)\vert \ge k.$
Thus, $k\le \displaystyle\min_{x,y\in V(G), x\ne y}\vert {\cal D}_G(x,y)\vert .$
Now, we suppose that
$k< \displaystyle\min_{x,y\in V(G), x\ne y}\vert {\cal D}_G(x,y)\vert.$ In such a case, for every $x',y'\in V(G)$ such that $\vert B\cap {\cal D}_G(x',y')\vert=k$, there exists a distinctive vertex $z_{x'y'}$ of $x',y'$ with $z_{x'y'}\in {\cal D}_G(x',y')-B$. Hence, the set $$B\cup \left(\displaystyle\bigcup_{x',y'\in V(G):\vert B\cap {\cal D}_G(x',y')\vert=k}\{z_{x'y'}\}\right)$$ is a $(k+1)$-metric generator for $G$, which is a contradiction. Therefore, $k=\displaystyle\min_{x,y\in V(G), x\ne y}\vert {\cal D}_G(x,y)\vert .$

(Sufficiency) Let $a,b\in V(G)$ such that $\displaystyle\min_{x,y\in V(G), x\ne y}\vert {\cal D}_G(x,y)\vert =\vert {\cal D}_G(a,b)\vert=k$. Since the set $$\bigcup_{x,y\in V(G)}{\cal D}_G(x,y)$$
is a $k$-metric generator for $G$ and  the pair $a,b$ is not distinguished by $k'>k$ vertices of $G$,  we conclude that $G$ is a $k$-metric dimensional graph.
\end{proof}

\subsection{On some families of $k$-metric dimensional graphs for some specific values of $k$}

The characterization proved in Theorem \ref{theokmetric} gives a result on general graphs. Thus, next we particularize this for some specific classes of graphs or we bound its possible value in terms of other parameters of the graph. To this end, we need the following concepts. Two vertices $x,y$ are called \emph{false twins}  if $N(x)=N(y)$ and $x,y$ are called \emph{true twins} if $N[x]=N[y]$. Two vertices $x,y$ are \emph{twins}  if they are false twins   or true twins. A vertex $x$ is said to be a \emph{twin}  if there exists a vertex $y\in V(G)-\{x\}$ such that $x$ and $y$ are twins in $G$. Notice that two vertices $x,y$ are twins if and only if ${\cal D}_G^*(x,y)=\emptyset.$

\begin{corollary}\label{remark2dimesional}
$\ $A connected graph $G$ of order $n\geq 2$ is $2$-metric dimensional if and only if $G$ has  twin vertices.
\end{corollary}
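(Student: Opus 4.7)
The plan is to derive the corollary as an immediate consequence of Theorem \ref{theokmetric}, combined with the simple observation that each of $x,y$ always distinguishes the pair $x,y$.

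First I would record the easy fact that for any two distinct vertices $x,y\in V(G)$, we have $\{x,y\}\subseteq \mathcal{D}_G(x,y)$: indeed $d_G(x,x)=0$ while $d_G(y,x)\geq 1$ (as $G$ is connected and $x\neq y$), and symmetrically for $y$. Hence $|\mathcal{D}_G(x,y)|\geq 2$ for every such pair, and the minimum in Theorem \ref{theokmetric} is at least $2$. Moreover, by the remark preceding the corollary, $\mathcal{D}_G^*(x,y)=\mathcal{D}_G(x,y)-\{x,y\}=\emptyset$ is equivalent to $x,y$ being twins, so $|\mathcal{D}_G(x,y)|=2$ if and only if $x,y$ are twins.

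For the forward direction, assume $G$ is $2$-metric dimensional. By Theorem \ref{theokmetric}, $2=\min_{x\neq y}|\mathcal{D}_G(x,y)|$, so there exist distinct $x,y\in V(G)$ with $|\mathcal{D}_G(x,y)|=2$. By the observation above, such a pair must consist of twin vertices. For the converse, suppose $G$ has a pair of twins $x,y$. Then $\mathcal{D}_G^*(x,y)=\emptyset$, so $|\mathcal{D}_G(x,y)|=2$, which combined with the universal lower bound $|\mathcal{D}_G(u,v)|\geq 2$ yields $\min_{u\neq v}|\mathcal{D}_G(u,v)|=2$; Theorem \ref{theokmetric} then gives that $G$ is $2$-metric dimensional. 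The case $n=2$, i.e.\ $G\cong K_2$, is covered separately by Remark \ref{remarkKMetric}, and the two vertices of $K_2$ are true twins, so both directions hold trivially there.

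There is no substantial obstacle: the entire content is the unsurprising fact that $\{x,y\}\subseteq \mathcal{D}_G(x,y)$ together with the twin/non-trivial-distinctive-vertex reformulation already stated in the paper. The only thing to be slightly careful about is handling the boundary case $n=2$ (where Remark \ref{remarkKMetric} says $K_2$ is $n$-metric dimensional, hence $2$-metric dimensional), so that the statement ``$n\geq 2$'' in the corollary is fully covered.
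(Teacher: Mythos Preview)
Your argument is correct and matches the paper's intended derivation: the corollary is stated without proof, immediately after the remark that $x,y$ are twins if and only if $\mathcal{D}_G^*(x,y)=\emptyset$, so the paper is relying on exactly the reasoning you spell out (Theorem~\ref{theokmetric} together with $\{x,y\}\subseteq\mathcal{D}_G(x,y)$). The separate treatment of $n=2$ via Remark~\ref{remarkKMetric} is harmless but unnecessary, since Theorem~\ref{theokmetric} already applies to $K_2$ and gives $k=2$ directly.
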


It is clear that $P_{2}$ and $P_{3}$ are $2$-metric dimensional. Now, a specific characterization for $2$-dimensional trees is obtained from Theorem \ref{theokmetric} (or from Corollary \ref{remark2dimesional}). A \emph{leaf} in a tree is a vertex of degree one, while a \emph{support vertex} is a vertex adjacent to a leaf.

\begin{corollary}\label{corolTree2}
$\ $A tree $T$ of order $n\geq 4$ is $2$-metric dimensional if and only if $T$ contains a support vertex which is adjacent to at least two leaves.
\end{corollary}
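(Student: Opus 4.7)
The plan is to reduce to Corollary \ref{remark2dimesional}, which tells us that $T$ is $2$-metric dimensional if and only if $T$ has twin vertices, and then use the tree structure to show that the only way a tree of order at least $4$ can have twins is to have a support vertex adjacent to at least two leaves.

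First I would dispose of true twins. If $x,y$ are true twins in a tree $T$, then $x\sim y$ and for every $z\in V(T)\setminus\{x,y\}$ with $z\sim x$ we also have $z\sim y$; but then $x,z,y,x$ would form a triangle, contradicting acyclicity. Hence true twins can only occur when $N(x)=\{y\}$ and $N(y)=\{x\}$, i.e.\ when $T\cong K_2$, which is excluded since $n\ge 4$.

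Next I would analyze false twins. Suppose $x,y$ satisfy $N(x)=N(y)$. If there were two distinct common neighbors $z_1,z_2$, then the $4$-cycle $x\,z_1\,y\,z_2\,x$ would appear in $T$, again contradicting acyclicity. Therefore $|N(x)|=|N(y)|=1$, so both $x$ and $y$ are leaves sharing the same (support) neighbor $v$; this gives a support vertex adjacent to at least two leaves. Conversely, if $v$ is a support vertex with leaves $x,y$ in its neighborhood, then $N(x)=N(y)=\{v\}$, so $x,y$ are (false) twins and by Corollary \ref{remark2dimesional} the tree $T$ is $2$-metric dimensional.

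There is no real obstacle here; both directions are short tree-structure arguments once twins are split into the true/false cases. The only thing to be careful about is the explicit use of $n\ge 4$ to rule out the degenerate $K_2$ case that arises in the true-twin analysis (note also that $P_3$, despite having $n=3$, already falls under the statement's hypothesis range since it has a support vertex with two leaves, but we do not need to treat it here).
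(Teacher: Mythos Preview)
Your argument is correct and follows exactly the route the paper intends: the corollary is stated as an immediate consequence of Corollary~\ref{remark2dimesional}, and you have supplied precisely the tree-structure details (no true twins unless $T\cong K_2$; false twins in a tree must be two leaves with a common support vertex) that make the deduction work. One small slip in your closing parenthetical: $P_3$ does \emph{not} fall under the hypothesis range $n\ge 4$, though you are right that the characterization would still hold for it; this is cosmetic and does not affect the proof.
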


An example of a $2$-metric dimensional tree is the star graph $K_{1,n-1}$, whose $2$-metric dimension is $\dim_2(K_{1,n-1})=n-1$ (see Corollary \ref{corotree2}). On the other side, an example of a tree $T$ which is not $2$-metric dimensional is drawn in Figure \ref{figTreeD3}. Notice that $S=\{v_{1},v_{3},v_{5},v_{6},v_{7}\}$ is a $3$-metric basis of $T$. Moreover, $T$ is $3$-metric dimensional since $|{\cal D}_T(v_1,v_3)|=3.$

\begin{figure}[!ht]
\centering
\begin{tikzpicture}[transform shape, inner sep = 1pt, outer sep = 0pt, minimum size = 20pt]
\node [draw=black, shape=circle, fill=white,text=black] (v1) at (0,0) {$v_1$};
\node [draw=black, shape=circle, fill=white,text=black] (v2) at (1,0) {$v_2$};
\node [draw=black, shape=circle, fill=white,text=black] (v3) at (2,.5) {$v_3$};
\node [draw=black, shape=circle, fill=white,text=black] (v4) at (2,-.5) {$v_4$};
\node [draw=black, shape=circle, fill=white,text=black] (v5) at (3,.5) {$v_5$};
\node [draw=black, shape=circle, fill=white,text=black] (v6) at (3,-.5) {$v_6$};
\node [draw=black, shape=circle, fill=white,text=black] (v7) at (4,-.5) {$v_7$};
\draw[black] (v1) -- (v2)  -- (v4) -- (v6) -- (v7);
\draw[black] (v2) -- (v3)  -- (v5);
\end{tikzpicture}
\caption{$S=\{v_{1},v_{3},v_{5},v_{6},v_{7}\}$ is a $3$-metric basis of $T$.}
\label{figTreeD3}
\end{figure}
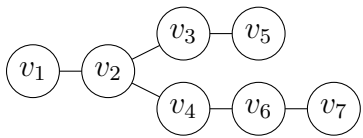

A {\em cut vertex} in a graph is a vertex whose removal increases the number of components of the graph and an {\em extreme vertex} is a vertex $v$ such that the subgraph induced by $N[v]$ is isomorphic to a complete graph. Also, a {\em block} is a maximal biconnected subgraph\footnote{A biconnected graph is a connected graph having no articulation vertices.} of the graph. Now, let $\mathfrak{F}$ be the family of sequences of connected graphs $G_1,G_2,\ldots,G_t$, $t\ge 2$, such that $G_1$ is a complete graph $K_{n_1}$, $n_1\ge 2$, and $G_i$, $i\ge 2$, is obtained recursively from $G_{i-1}$ by adding a complete graph $K_{n_i}$, $n_i\ge 2$, and identifying one vertex of $G_{i-1}$ with one vertex of $K_{n_i}$.

From this point we will say that a connected graph $G$ is a \emph{generalized tree}\footnote{In some works these graphs are called block graphs.} if and only if there exists a sequence $\{G_1,G_2,\ldots,G_t\}\in \mathfrak{F}$ such that $G_t=G$ for some $t\ge 2$. Notice that in these generalized trees every vertex is either, a cut vertex or an extreme vertex. Also, every complete graph used to obtain the generalized tree is a block of the graph. Note, that if every $K_{n_i}$ is isomorphic to $K_2$, then $G_t$ is a tree, justifying the terminology used. With these concepts we give the following consequence of Theorem \ref{theokmetric}, which is a generalization of Corollary \ref{corolTree2}.

\begin{corollary}\label{coro-gen-tree}
$\ $A generalized tree $G$ is $2$-metric dimensional if and only if $G$ contains at least two extreme vertices being adjacent to a common cut vertex.
\end{corollary}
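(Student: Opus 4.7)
The plan is to invoke Corollary~\ref{remark2dimesional}, which characterises $2$-metric dimensional graphs as exactly those having twin vertices; the claim then reduces to showing that in a generalized tree $G$, twin vertices exist if and only if two extreme vertices share a common cut-vertex neighbour. Throughout I would use the defining properties of a generalized tree: every block is complete, every vertex is either a cut vertex or extreme, and two distinct blocks meet in at most one vertex.

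For the necessity direction I would split on the type of twins. Suppose $u,v$ are true twins, so $N[u]=N[v]$. I would first argue that $N[u]$ induces a clique --- otherwise two non-adjacent vertices in $N[u]$ together with $u,v$ would form an induced $4$-cycle, which is impossible in a block graph. Hence $u,v$ lie in a single complete block $B=N[u]$ and are both extreme. Since $G$ has at least two blocks (by definition of a generalized tree) and neither $u$ nor $v$ is a cut vertex, $B$ must contain a cut vertex $w$, which is then adjacent to both. If instead $u,v$ are false twins, I would first show both are extreme: were $u$ a cut vertex, $N(u)$ would meet several components of $G-u$, while $N(v)$ (which does not contain $u$ since $u\not\sim v$) would be confined to the component of $v$, contradicting $N(u)=N(v)$. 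Any common neighbour $w$ of $u,v$ then lies in both the (unique) block of $u$ and the (unique) block of $v$; these blocks differ because $u\not\sim v$, so $w$ is a cut vertex.

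For the sufficiency direction, suppose $u,v$ are extreme vertices sharing a cut-vertex neighbour $w$. Since each of $u,v$ is extreme and hence not a cut vertex, its closed neighbourhood coincides with the unique block containing it; write $B_u=N[u]$ and $B_v=N[v]$. If $B_u=B_v$ then $N[u]=N[v]$ and $u,v$ are true twins at once. Otherwise $B_u\neq B_v$, and these two blocks share only the vertex $w$; in that sub-case I would argue that $B_u=\{u,w\}$ and $B_v=\{v,w\}$ must be pendant edges, so that $N(u)=\{w\}=N(v)$ and $u,v$ are false twins.

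The most delicate point is the last step of the sufficiency argument: showing that, when $u$ and $v$ lie in distinct blocks meeting at $w$, those blocks must in fact be edges. This is where the hypothesis that the common neighbour is specifically a cut vertex (together with the block-tree structure and the extremeness of $u,v$) must be used most carefully, and I expect it to be the main obstacle of the proof.
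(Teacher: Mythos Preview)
Your reduction to Corollary~\ref{remark2dimesional} is exactly the intended route; the paper states the result without proof, presenting it as the block-graph analogue of Corollary~\ref{corolTree2}. Your necessity argument is essentially correct (a minor point: the configuration you build from true twins $u,v$ together with two non-adjacent members of $N[u]$ is an induced diamond $K_4-e$ rather than an induced $C_4$, since $u\sim v$; but block graphs forbid induced diamonds as well, so the conclusion stands).

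The genuine gap lies in the sufficiency direction, precisely at the step you flagged as the main obstacle. Your hope that $B_u$ and $B_v$ must be pendant edges when $B_u\ne B_v$ is not a consequence of the hypotheses, and in fact the corollary as literally stated is false. Take the generalized tree $G$ on $\{u,w,w',v,z\}$ with blocks $\{u,w,w'\}\cong K_3$, $\{w,v\}\cong K_2$ and $\{w',z\}\cong K_2$. Here $u$ and $v$ are extreme and both adjacent to the cut vertex $w$, so the hypothesis of the corollary is met; yet a direct inspection of the five open and closed neighbourhoods shows that $G$ has no twin vertices, hence by Corollary~\ref{remark2dimesional} it is not $2$-metric dimensional (indeed $\min_{x\ne y}\vert{\cal D}_G(x,y)\vert=\vert{\cal D}_G(u,w)\vert=3$). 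The obstacle you isolated is therefore real and cannot be removed without strengthening the hypothesis, for instance to ``two extreme vertices lying in a common block, or two vertices of degree one sharing a common neighbour''.
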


The \emph{Cartesian product graph} $G\square H$, of two graphs $G=(V_{1},E_{1})$ and $H=(V_{2},E_{2})$, is the graph whose vertex set is $V(G\square H)=V_{1}\times V_{2}$ and any two distinct vertices $(x_{1},x_{2}),(y_{1},y_{2})\in V_{1}\times V_{2}$ are adjacent in $G\square H$ if and only if either:

\begin{enumerate}[(a)]
\item\label{cartesian1}$\ $ $x_{1}=y_{1}$ and $x_{2}\sim y_{2}$, or
\item\label{cartesian2}$\ $ $x_{1}\sim y_{1}$ and $x_{2}=y_{2}$.
\end{enumerate}

\begin{proposition}\label{corolGCartesian}
$\ $Let $G$ and $H$ be two connected graphs of order $n\ge 2$ and $n'\ge 3$, respectively. If $G\square H$ is $k$-metric dimensional, then $ k\geq 3$.
\end{proposition}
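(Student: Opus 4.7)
The plan is to reduce the statement to a twin-freeness claim. By Remark~\ref{remarkKMetric}, $G\square H$ is $k$-metric dimensional for some $k\ge 2$, so proving $k\ge 3$ amounts to ruling out $k=2$. By Corollary~\ref{remark2dimesional}, this in turn is equivalent to showing that $G\square H$ has no twin vertices. So the whole proof reduces to verifying that the Cartesian product $G\square H$ contains no pair of (true or false) twins.

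To verify this, I would suppose for contradiction that $(x_1,x_2)$ and $(y_1,y_2)$ are twins in $G\square H$ and split according to the number of coordinates in which they differ. If they agree in one coordinate (say $x_1=y_1$, $x_2\neq y_2$), then picking any $u\in N_G(x_1)$ (which exists since $G$ is connected with $|V(G)|\ge 2$) produces a neighbor $(u,x_2)$ of $(x_1,x_2)$ that neither equals $(x_1,y_2)$ nor is adjacent to it, immediately contradicting the twin hypothesis. The symmetric case $x_2=y_2,\ x_1\neq y_1$ is handled in the same way, using a neighbor $v$ of $x_2$ in $H$ (which exists because $H$ is connected with $|V(H)|\ge 3\ge 2$).

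The main obstacle is the remaining case $x_1\neq y_1$ and $x_2\neq y_2$. True twins are ruled out instantly, because two vertices of $G\square H$ that disagree in both coordinates are non-adjacent. To rule out false twins, I would examine the neighbors of $(x_1,x_2)$ lying in the $H$-fiber $\{x_1\}\times V(H)$: each such neighbor $(x_1,v)$ with $v\sim_H x_2$ must also be a neighbor of $(y_1,y_2)$, and the Cartesian-product adjacency rule (combined with $x_1\neq y_1$) forces $v=y_2$ and $x_1\sim_G y_1$. Hence $N_H(x_2)\subseteq\{y_2\}$, and symmetrically $N_H(y_2)\subseteq\{x_2\}$. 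This makes $\{x_2,y_2\}$ a connected component of $H$, contradicting the connectedness of $H$ together with $|V(H)|\ge 3$.

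The hypothesis $|V(H)|\ge 3$ enters only in this last step and is sharp, since $K_2\square K_2\cong C_4$ does contain twins and is therefore $2$-metric dimensional.
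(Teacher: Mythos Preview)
Your proof is correct and follows essentially the same strategy as the paper: reduce to showing $G\square H$ has no twins (via Remark~\ref{remarkKMetric} and Corollary~\ref{remark2dimesional}), then use the structure of neighborhoods in the Cartesian product together with the fact that a connected $H$ of order at least $3$ cannot satisfy $N_H(x_2)\subseteq\{y_2\}$ and $N_H(y_2)\subseteq\{x_2\}$. Your case analysis is in fact more explicit than the paper's terse argument---in particular you treat true twins separately, whereas the paper only verifies that open neighborhoods differ---so if anything your version is the more complete of the two.
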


\begin{proof}
$\ $Notice that for any vertex $(a,b)\in V(G\square H)$, $N_{G\square H}((a,b))=(N_{G}(a)\times \{b\})\cup(\{a\}\times N_{H}(b))$. Now, for any two distinct vertices $(a,b),(c,d)\in V(G\square H)$ at least $a\ne c$ or $b\ne d$ and since $H$ is a connected graph of order greater than two, we have that at least $N_{H}(b)\ne \{d\}$ or $N_{H}(d)\ne \{b\}$. Thus, we obtain that $N_{G\square H}((a,b))\ne N_{G\square H}((c,d))$. Therefore, $G\square H$ does not contain any twins and, by Remark \ref{remarkKMetric} and Corollary \ref{remark2dimesional}, if $G\square H$ is $k$-metric dimensional, then $k\geq 3$.
\end{proof}

\begin{proposition}\label{propKClyce}
$\ $Let $C_{n}$ be a cycle graph of order $n$. If $n$ is odd, then $C_n$ is $(n-1)$-metric dimensional and if $n$ is even, then  $C_n$ is $(n-2)$-metric dimensional.
\end{proposition}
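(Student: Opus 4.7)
The plan is to compute $\mu := \min_{x \ne y \in V(C_n)} |\mathcal{D}_{C_n}(x,y)|$ and then apply Theorem \ref{theokmetric}. I would label $V(C_n) = \{0, 1, \ldots, n-1\}$ with the natural cyclic adjacency $i \sim i+1 \pmod n$, so that the distance formula reads $d_{C_n}(i, j) = \min(|i-j|,\, n - |i-j|)$. Using vertex-transitivity of $C_n$ together with its reflection symmetry, it suffices, for the purpose of computing $\mu$, to analyze pairs of the form $(0, m)$ with $1 \le m \le \lfloor n/2 \rfloor$.

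The core lemma I would prove is a \emph{midpoint characterization}: a vertex $z$ fails to distinguish $0$ and $m$ if and only if $z$ is the midpoint of one of the two arcs of $C_n$ joining these vertices, and such an arc of length $\ell$ contains a midpoint vertex precisely when $\ell$ is even. The forward direction follows because any $z$ with $d_{C_n}(z,0) = d_{C_n}(z,m)$ must route its two shortest paths through the same arc (otherwise summing the two distances gives a contradiction with the arc lengths $m$ and $n-m$), forcing $z$ to sit exactly halfway along that arc; the converse is immediate from the formula. The two arcs in question have lengths $m$ and $n - m$.

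A parity split then completes the proof. If $n$ is odd, then $m$ and $n-m$ always have opposite parities, so exactly one arc yields a single equidistant vertex and $|\mathcal{D}_{C_n}(0, m)| = n - 1$ uniformly in $m$; hence $\mu = n - 1$ and $C_n$ is $(n-1)$-metric dimensional. If $n$ is even, the two lengths share parity: for $m$ odd there are no equidistant vertices and $|\mathcal{D}_{C_n}(0, m)| = n$, while for $m$ even there are exactly two, giving $|\mathcal{D}_{C_n}(0, m)| = n - 2$. Taking $m = 2$ (legitimate since $n \ge 4$ in the even case) realizes the minimum $\mu = n - 2$, so $C_n$ is $(n-2)$-metric dimensional.

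The main technical obstacle is the midpoint characterization; care is needed in the antipodal boundary case when $n$ is even and $m = n/2$, where the two arcs happen to have equal length but remain distinct as edge sets. The argument goes through identically in that subcase, and once the lemma is in hand the proposition follows immediately from Theorem \ref{theokmetric}.
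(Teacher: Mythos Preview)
Your proposal is correct and follows essentially the same route as the paper: both arguments compute $\min_{x\ne y}|\mathcal{D}_{C_n}(x,y)|$ by identifying the vertices equidistant from a given pair as the arc midpoints, split on the parity of $n$, and then invoke Theorem~\ref{theokmetric}. Your version is simply more explicit---you justify the midpoint characterization and use vertex-transitivity to normalize the pair---whereas the paper asserts the corresponding facts directly.
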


\begin{proof}
$\ $We consider two cases:
\begin{enumerate}[(1)]
\item $\ $ $n$ is odd. For any pair of different vertices $u,v\in V(C_{n})$ there exist only one vertex $w\in V(C_{n})$ such that $w$ does not distinguish $u$ and $v$.  Therefore, by Theorem \ref{theokmetric}, $C_n$ is $(n-1)$-metric dimensional.
\item $\ $ $n$ is even. In this case, $C_{n}$ is $2$-antipodal\footnote{The diameter of $G=(V,E)$ is defined as $D(G)=\max_{u,v\in V(G)}\{d_{G}(u,v)\}$. We say that $u$ and $v$ are antipodal vertices or mutually antipodal if $d_G(u,v)=D(G)$. We recall that $G=(V,E)$ is $2$-antipodal if for each vertex $x\in V$ there exists exactly one vertex $y\in V$ such that $d_G(x,y)=D(G)$.}. For any pair of vertices $u,v\in V(C_{n})$, such that $d(u,v)=2l$, we can take a vertex $x$ such that $d(u,x)=d(v,x)=l$. So,  ${\cal D}_G(u,v)=V(C_n)-\{x,y\}$, where $y$ is antipodal to $x$. On the other hand, if $d(u,v)$ is odd, then ${\cal D}_G(u,v)=V(C_n)$.  Therefore, by Theorem \ref{theokmetric}, the graph $C_n$ is $(n-2)$-metric dimensional.
\end{enumerate}
\end{proof}

Now, according to Remark \ref{remarkKMetric} we have that every graph of order $n$, different from $K_2$, is $k$-metric dimensional for some $k\le n-1$. Next we characterize those graphs being $(n-1)$-metric dimensional.

\begin{theorem}
$\ $A graph $G$ of order $n\ge 3$ is $(n-1)$-metric dimensional if and only if $G$ is a path or $G$ is an odd cycle.
\end{theorem}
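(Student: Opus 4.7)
The plan is to reduce everything to Theorem \ref{theokmetric}: $G$ is $(n-1)$-metric dimensional iff every pair of distinct vertices has at most one non-distinctive vertex, with equality attained for at least one pair. Equivalently, $|\mathcal{D}_G(x,y)|\ge n-1$ for all $x\ne y$, and $|\mathcal{D}_G(a,b)|=n-1$ for some pair.

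For sufficiency, I would handle the two families separately. For an odd cycle, Proposition \ref{propKClyce} already yields the conclusion. For a path $P_n=v_1v_2\cdots v_n$, I would observe that a vertex $v_k$ fails to distinguish $v_i$ and $v_j$ (with $i<j$) precisely when $|k-i|=|k-j|$, i.e., when $i+j$ is even and $k=(i+j)/2$; this is at most one vertex, so $|\mathcal{D}_{P_n}(v_i,v_j)|\ge n-1$, with equality achieved by the pair $v_1,v_3$. By Theorem \ref{theokmetric}, $P_n$ is $(n-1)$-metric dimensional.

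For necessity, assume $G$ is $(n-1)$-metric dimensional. My main goal is to show $\Delta(G)\le 2$; once this is established, connectedness together with $n\ge 3$ forces $G$ to be a path or a cycle, and even cycles are excluded by Proposition \ref{propKClyce}. Suppose for contradiction that some vertex $v$ has three distinct neighbours $a,b,c$. Then $v\notin \mathcal{D}_G(a,b)$, $v\notin \mathcal{D}_G(a,c)$ and $v\notin \mathcal{D}_G(b,c)$, and by the $(n-1)$-metric dimensional hypothesis $v$ must be the \emph{unique} non-distinctive vertex for each of these three pairs. In particular every other vertex—and specifically each of $a,b,c$—distinguishes the remaining two. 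Since $d_G(c,a), d_G(c,b)\in\{1,2\}$ (both use the path through $v$) and these distances must differ, one can assume without loss of generality that $c\sim a$ and $c\not\sim b$. Applying the same reasoning to the pair $\{a,c\}$ forces $b$ to distinguish $a$ from $c$; since $d_G(b,c)=2$ and $d_G(b,a)\in\{1,2\}$, we must have $b\sim a$. But then $d_G(a,b)=d_G(a,c)=1$, so $a$ also fails to distinguish $b$ and $c$, giving two non-distinctive vertices (namely $v$ and $a$) for the pair $\{b,c\}$ and contradicting $|\mathcal{D}_G(b,c)|\ge n-1$.

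The main obstacle I anticipate is the case analysis in the $\Delta(G)\ge 3$ argument: one must cleanly derive the adjacency pattern among $a,b,c$ from the uniqueness of $v$ as a non-distinguisher, and spot that the symmetric structure inevitably produces a second non-distinguisher. Everything else—sufficiency for paths, and the appeal to Proposition \ref{propKClyce} for cycles—is routine.
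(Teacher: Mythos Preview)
Your proof is correct and follows essentially the same approach as the paper: sufficiency via direct analysis of paths and Proposition~\ref{propKClyce} for cycles, and necessity by showing $\Delta(G)\le 2$ through a local argument at a vertex of degree at least three. The only cosmetic difference is that the paper enumerates the four possible adjacency patterns among the three neighbours explicitly (its Figure~\ref{figCases}), whereas you derive the pattern step by step from the uniqueness of the non-distinguisher; the underlying idea and the resulting contradiction are identical.
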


\begin{proof}
$\ $Since $n\ge 3$, by Remark \ref{remarkKMetric}, $G$ is $k$-metric dimensional for some $k\in \{2,\ldots, n-1\}$. 
Now,  for any pair of different vertices $u,v\in V(P_{n})$ there exists at most one vertex $w\in V(P_{n})$ such that $w$ does not distinguish $u$ and $v$. Then  $P_n$ is $(n-1)$-metric dimensional. By Proposition \ref{propKClyce}, we have that if $G$ is an odd cycle, then $G$ is $(n-1)$-metric dimensional.

On the contrary, let $G$ be a $(n-1)$-metric dimensional graph. Hence, for every pair of different vertices $x,y\in V(G)$ there exists at most one vertex which does not distinguish $x,y$. Suppose $\Delta(G)>2$ and let $v\in V(G)$ such that $\{u_1,u_2,u_3\}\subset N(v)$. Figure \ref{figCases} shows all the possibilities for the links between these four vertices. Figures \ref{figCases} (a), \ref{figCases} (b) and \ref{figCases} (d) show that $v,u_1$ do not distinguish $u_2,u_3$. Figure \ref{figCases} (c) shows that $u_1,u_2$ do not distinguish $v,u_3$. Thus, from the cases above we deduce that there is a pair of different vertices  which is not distinguished by at least two other different vertices. Thus $G$ is not a $(n-1)$-metric dimensional graph, which is a contradiction. As a consequence, $\Delta(G)\le 2$ and we have that $G$ is either a path or a cycle graph. Finally,  by Proposition \ref{propKClyce}, we have that if $G$ is a cycle, then $G$ has odd order.
\end{proof}

\begin{figure}[h]
\centering
\begin{tikzpicture}[transform shape, inner sep = 1pt, outer sep = 0pt, minimum size = 20pt]
\node [draw=black, shape=circle, fill=white,text=black] (ua1) at (180:1.42) {$u_1$};
\node [draw=black, shape=circle, fill=white,text=black] (va) at (0,0) {$v$};
\node [draw=black, shape=circle, fill=white,text=black] (ua2) at (315:1.42) {$u_2$};
\node [draw=black, shape=circle, fill=white,text=black] (ua3) at (45:1.42) {$u_3$};
\draw[black] (ua1) -- (va)  -- (ua2);
\draw[black] (va)  -- (ua3);
\node at ([yshift=-1.8 cm]va) {(a)};

\node [draw=black, shape=circle, fill=white,text=black, xshift=4 cm] (ub1) at (180:1.42) {$u_1$};
\node [draw=black, shape=circle, fill=white,text=black, xshift=4 cm] (vb) at (0,0) {$v$};
\node [draw=black, shape=circle, fill=white,text=black, xshift=4 cm] (ub2) at (315:1.42) {$u_2$};
\node [draw=black, shape=circle, fill=white,text=black, xshift=4 cm] (ub3) at (45:1.42) {$u_3$};
\draw[black] (ub1) -- (vb)  -- (ub2) -- (ub3) -- (vb);
\node at ([yshift=-1.8 cm]vb) {(b)};

\node [draw=black, shape=circle, fill=white,text=black] (vc) at ([xshift=3 cm]ub2) {$v$};
\node [draw=black, shape=circle, fill=white,text=black] (uc1) at ([shift=({180:1.42})]vc) {$u_1$};
\node [draw=black, shape=circle, fill=white,text=black] (uc2) at ([shift=({0:1.42})]vc) {$u_2$};
\node [draw=black, shape=circle, fill=white,text=black] (uc3) at ([shift=({90:1.42})]vc) {$u_3$};
\draw[black] (uc1) -- (vc)  -- (uc2) -- (uc3) -- (uc1);
\draw[black] (vc) -- (uc3);
\node at ([yshift=-.8 cm]vc) {(c)};

\node [draw=black, shape=circle, fill=white,text=black] (vd) at ([xshift=8 cm]vb) {$v$};
\node [draw=black, shape=circle, fill=white,text=black] (ud1) at ([shift=({225:1.42})]vd) {$u_1$};
\node [draw=black, shape=circle, fill=white,text=black] (ud2) at ([shift=({315:1.42})]vd) {$u_2$};
\node [draw=black, shape=circle, fill=white,text=black] (ud3) at ([shift=({90:1.42})]vd) {$u_3$};
\draw[black] (ud1) -- (vd)  -- (ud2) -- (ud3) -- (ud1) -- (ud2);
\draw[black] (vd) -- (ud3);
\node at ([yshift=-1.8 cm]vd) {(d)};
\end{tikzpicture}
\caption{Possible cases for a vertex $v$ with three adjacent vertices $u_1,u_2,u_3$.}\label{figCases}
\end{figure}
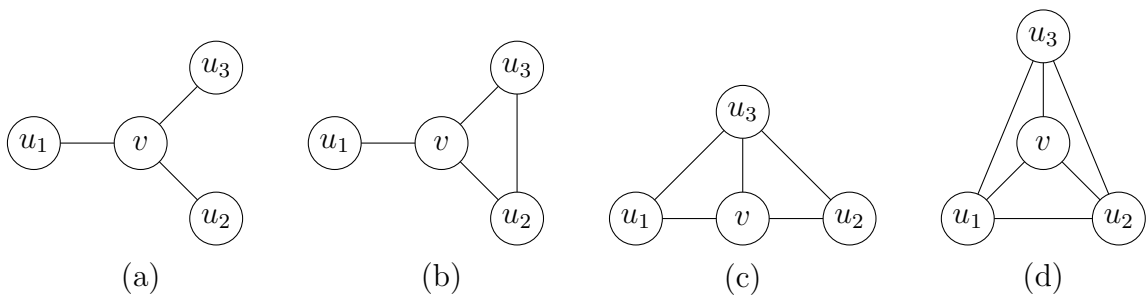

\subsection{Bounding the value $k$ for which a graph is $k$-metric dimensional}\label{SectionBoundK-dimensional}

In order to continue presenting our results, we need to introduce some definitions. A vertex of degree at least three in a graph $G$ will be called a \emph{major vertex} of $G$. Any end-vertex (a vertex of degree one) $u$ of $G$ is said to be a \emph{terminal vertex} of a major vertex $v$ of $G$ if $d_{G}(u,v)<d_{G}(u,w)$ for every other major vertex $w$ of $G$. The \emph{terminal degree} $\ter(v)$ of a major vertex $v$ is the number of terminal vertices of $v$. A major vertex $v$ of $G$ is an \emph{exterior major vertex} of $G$ if it has positive terminal degree. Let $\mathcal{M}(G)$ be the set of exterior major vertices of $G$ having terminal degree greater than one.

Given $w\in \mathcal{M}(G)$ and a terminal vertex $u_{j}$ of $w$, we denote by  $P(u_j,w)$ the shortest path that starts at $u_{j}$ and ends at $w$. Let $l(u_{j},w)$ be the length of $P(u_j,w)$. Now, given $w\in \mathcal{M}(G)$ and two terminal vertices $u_{j},u_{r}$ of $w$ we denote by  $P(u_{j},w,u_{r})$  the shortest path from $u_{j}$ to $u_{r}$ containing  $w$,
 and by  $\varsigma(u_{j},u_{r})$  the length of $P(u_{j},w,u_{r})$. Notice that, by definition of exterior major vertex, $P(u_{j},w,u_{r})$ is obtained by concatenating the paths $P(u_{j},w)$ and $P(u_{r},w)$, where $w$ is the only vertex of degree greater than two lying on these paths.

 Finally, given $w\in \mathcal{M}(G)$ and the set of terminal vertices $U=\{u_{1},u_{2},\ldots,u_{k}\}$ of $w$, for $j\not=r$ we define $\varsigma(w)=\displaystyle\min_{u_{j},u_{r}\in U}\{\varsigma(u_{j},u_{r})\}$ and  $l(w)=\displaystyle\min_{u_{j}\in U}\{l(u_{j},w)\}$.

\begin{figure}[!ht]
\centering
\begin{tikzpicture}[transform shape, inner sep = 1pt, outer sep = 0pt, minimum size = 20pt]
\node [draw=black, shape=circle, fill=white,text=black] (v1) at (0,0) {$v_1$};
\node [draw=black, shape=circle, fill=white,text=black] (v8) at (0,1.5) {$v_8$};
\node [draw=black, shape=circle, fill=white,text=black] (v12) at (1,3) {$v_{12}$};
\node [draw=black, shape=circle, fill=white,text=black] (v2) at ([shift=({1,0})]v1) {$v_2$};
\node [draw=black, shape=circle, fill=white,text=black] (v3) at ([shift=({1,0})]v2) {$v_3$};
\node [draw=black, shape=circle, fill=white,text=black] (v4) at ([shift=({1,0})]v3) {$v_4$};
\node [draw=black, shape=circle, fill=white,text=black] (v5) at ([shift=({1,0})]v4) {$v_5$};
\node [draw=black, shape=circle, fill=white,text=black] (v6) at ([shift=({1,0})]v5) {$v_6$};
\node [draw=black, shape=circle, fill=white,text=black] (v7) at ([shift=({1,0})]v6) {$v_7$};
\node [draw=black, shape=circle, fill=white,text=black] (v9) at ([shift=({1,0})]v8) {$v_9$};
\node [draw=black, shape=circle, fill=white,text=black] (v10) at ([shift=({4,0})]v9) {$v_{10}$};
\node [draw=black, shape=circle, fill=white,text=black] (v11) at ([shift=({1,0})]v10) {$v_{11}$};
\node [draw=black, shape=circle, fill=white,text=black] (v18) at ([shift=({1,0})]v11) {$v_{18}$};
\node [draw=black, shape=circle, fill=white,text=black] (v13) at ([shift=({1,0})]v12) {$v_{13}$};
\node [draw=black, shape=circle, fill=white,text=black] (v14) at ([shift=({1,0})]v13) {$v_{14}$};
\node [draw=black, shape=circle, fill=white,text=black] (v15) at ([shift=({1,0})]v14) {$v_{15}$};
\node [draw=black, shape=circle, fill=white,text=black] (v16) at ([shift=({1,0})]v15) {$v_{16}$};
\node [draw=black, shape=circle, fill=white,text=black] (v17) at ([shift=({1,0})]v16) {$v_{17}$};
\draw[black] (v1) -- (v2)  -- (v3) -- (v4) -- (v5) -- (v6) -- (v7);
\draw[black] (v8) -- (v9)  -- (v3) -- (v12);
\draw[black] (v3) -- (v13)  -- (v14) -- (v15) -- (v16) -- (v17);
\draw[black] (v11) -- (v15)  -- (v5) -- (v10);
\draw[black] (v4) -- (v14);
\draw[black] (v11) -- (v18);
\end{tikzpicture}
\caption{A graph $G$ where $\varsigma(G)=3$.}
\label{example-G-*}
\end{figure}
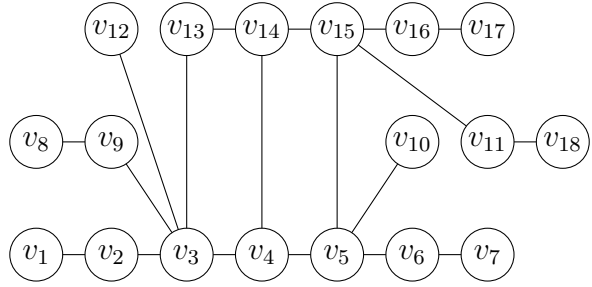

From the local parameters above we define the following global parameter $$\varsigma(G)=\min_{w\in \mathcal{M}(G)}\{\varsigma(w)\}.$$

An example which helps to understand the notation above is given in Figure \ref{example-G-*}.
In such a case we have $\mathcal{M}(G)=\{v_3,v_5,v_{15}\}$ and, for instance, $\{v_1,v_8,v_{12}\}$ are terminal vertices of $v_3$. So, $v_3$ has terminal degree three ($\ter(v_3)=3$) and it follows that
\begin{flalign*}
l(v_3)&=\min\{l(v_{12},v_3),l(v_8,v_3),l(v_1,v_3)\}=\min\{1,2,2\}=1,
\end{flalign*}
and
\begin{flalign*}
\varsigma(v_3)&=\displaystyle\min\{\varsigma(v_{12},v_1),\varsigma(v_{12},v_8),\varsigma(v_8,v_1)\}=\displaystyle\min\{3,3,4\}=3.
\end{flalign*}
Similarly, it is possible to observe that $\ter(v_5)=2$, $l(v_5)=1$, $\varsigma(v_5)=3$, $\ter(v_{15})=2$, $l(v_{15})=2$ and $\varsigma(v_{15})=4$. Therefore, $\varsigma(G)=3$.

 According to this notation we present the following result.

\begin{theorem}\label{coroKcota}
$\ $Let $G$ be a connected graph such that  $\mathcal{M}(G)\ne \emptyset$. If $G$ is $k$-metric dimensional, then $k\leq \varsigma(G)$.
\end{theorem}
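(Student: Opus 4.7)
By Theorem \ref{theokmetric}, $k = \min_{x \ne y}|{\cal D}_G(x,y)|$, so it suffices to exhibit a single pair $(x, y)$ with $|{\cal D}_G(x,y)| \le \varsigma(G)$. My plan is to pick $w \in \mathcal{M}(G)$ realising $\varsigma(w) = \varsigma(G)$, a pair of terminal vertices $u_j, u_r$ of $w$ realising $\varsigma(u_j,u_r) = \varsigma(w)$, and then to place $x, y$ inside the two branches from $w$ to $u_j$ and to $u_r$ in such a way that only vertices on those two branches can distinguish them.

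Write $p = l(u_j, w) \le l(u_r, w) = q$ and label the branches $P(u_j,w) = u_j = a_0 a_1 \cdots a_p = w$ and $P(u_r,w) = u_r = b_0 b_1 \cdots b_q = w$. The candidate pair will be $x = a_0 = u_j$ and $y = b_{q-p}$, chosen so that $d_G(x, w) = d_G(y, w) = p$. The preliminary structural fact I will isolate is that, by the definition of a terminal vertex, every internal vertex of either branch has degree exactly $2$ in $G$: any larger degree would turn such a vertex into a major vertex strictly closer to the corresponding leaf than $w$ is, contradicting the very choice of $w$ as that leaf's closest major vertex.

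Consequently, for any $z$ outside the $u_j$-branch the shortest $z$--$u_j$ path is forced through $w$, and the same holds for $b_{q-p}$. Partition $V(G) = V_1 \cup V_2 \cup V_3$ with $V_1 = \{a_0, \ldots, a_{p-1}\}$, $V_2 = \{b_0, \ldots, b_{q-1}\}$ and $V_3$ the rest, and check three short cases: for $z \in V_3$ one has $d_G(z,x) = d_G(z,w) + p = d_G(z,y)$, so $z$ does not distinguish; for $z = a_i \in V_1$ the distances $i$ and $2p - i$ differ because $i \le p - 1$; and for $z = b_i \in V_2$ the distances $(q - i) + p$ and $|i - (q - p)|$ coincide, after a sign split on $i - (q - p)$, only if $i = q$ or $p = 0$, both impossible. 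Hence ${\cal D}_G(x,y) = V_1 \cup V_2$, giving $|{\cal D}_G(x,y)| = p + q = \varsigma(u_j, u_r) = \varsigma(G)$, and Theorem \ref{theokmetric} then yields $k \le \varsigma(G)$.

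The main delicate point is the asymmetric placement $y = b_{q-p}$ (rather than the more symmetric $y = u_r$) when $p < q$: shifting $y$ off the leaf is exactly what makes the $V_3$-contribution cancel, but the trade-off is that one must then carefully verify that no internal vertex of the longer branch $V_2$ accidentally fails to distinguish, which is precisely the purpose of the sign case split on $i - (q - p)$.
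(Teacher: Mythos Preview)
Your proof is correct and follows the same strategy as the paper's: exhibit a pair $(x,y)$, one on each of the two branches at $w$, equidistant from $w$ so that nothing outside the branches distinguishes them, and then count exactly $p+q=\varsigma(G)$ distinguishing vertices on the branches. The paper simply takes $x=u'_j$ and $y=u'_r$, the two neighbours of $w$ on the paths $P(u_j,w)$ and $P(u_r,w)$ (both at distance $1$ from $w$), which spares the asymmetric placement $y=b_{q-p}$ and the accompanying sign case split in $V_2$.
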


\begin{proof}
$\ $We claim that there exists at least one pair of different vertices $x,y\in V(G)$ such that $\vert{\cal D}_G(x,y)\vert=\varsigma(G)$. To see this, let $w\in \mathcal{M}(G)$ and let $u_{1},u_{2}$ be two terminal vertices of $w$ such that $\varsigma(G)=\varsigma(w)=\varsigma(u_{1},u_{2})$. Let $u'_{1}$ and $u'_{2}$ be the vertices adjacent to $w$ in the shortest paths $P(u_{1},w)$ and $P(u_{2},w)$, respectively. Notice that it could happen $u'_{1}=u_{1}$ or $u'_{2}=u_{2}$. Since every vertex $v\not\in V\left(P(u_{1},w,u_{2})\right)-\{w\}$ satisfies that $d_{G}(u'_{1},v)=d_{G}(u'_{2},v)$, and the only distinctive vertices of $u'_{1},u'_{2}$ are those ones belonging to $P(u'_{1},u_{1})$ and $P(u'_{2},u_{2})$, we have that $\vert{\cal D}_G(u'_{1},u'_{2})\vert=\varsigma(G)$. Therefore, by Theorem \ref{theokmetric}, if $G$ is $k$-metric dimensional, then $k\leq \varsigma(G)$.
\end{proof}

The upper bound of Theorem \ref{coroKcota} is tight. For instance, it is achieved for every tree different from a path as it is proved further in Section \ref{sect-dim-trees}, where the $k$-metric dimension of trees is studied.


A {\em clique} in a graph $G$ is a set of vertices $S$ such that the subgraph induced by $S$, denoted by $\langle S\rangle$, is isomorphic to a complete graph. The maximum cardinality of a clique in a graph $G$ is the {\em clique number} and it is denoted by $\omega(G)$. We will say that $S$ is an $\omega(G)$-clique if $|S|=\omega(G)$.

\begin{theorem}\label{clique-versus-k}
$\ $Let $G$ be a graph of order $n$ different from a complete graph. If $G$ is $k$-metric dimensional, then $k\le n-\omega(G)+1$.
\end{theorem}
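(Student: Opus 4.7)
The plan is to invoke Theorem \ref{theokmetric} and exhibit two distinct vertices $x,y\in V(G)$ with $|{\cal D}_G(x,y)|\le n-\omega(G)+1$. I would fix an $\omega(G)$-clique $S$ and look for a poorly-distinguished pair inside $S$. The first observation to record is that for any $x,y\in S$ and any $z\in S\setminus\{x,y\}$ one has $d_G(x,z)=1=d_G(y,z)$, so $z\notin{\cal D}_G(x,y)$. This already gives
\[
|{\cal D}_G(x,y)|\le n-(\omega(G)-2)=n-\omega(G)+2,
\]
and the remaining task is to find a pair $x,y\in S$ for which at least one additional vertex outside $S$ is non-distinctive.

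The main step would be to exploit a vertex $u\in V(G)\setminus S$, which exists because $G$ is not complete. Setting $d=\min_{s\in S}d_G(u,s)$ and picking $s_0\in S$ attaining the minimum, the triangle inequality applied through the clique edge $s_0s$ gives $d\le d_G(u,s)\le d+1$ for every $s\in S$. Thus the $\omega(G)$ values $\{d_G(u,s):s\in S\}$ all lie in the two-element set $\{d,d+1\}$. Assuming $\omega(G)\ge 3$, the pigeonhole principle supplies distinct $x,y\in S$ with $d_G(u,x)=d_G(u,y)$, so $u\notin{\cal D}_G(x,y)$ as well. The set of non-distinctive vertices for the pair $x,y$ then contains $(S\setminus\{x,y\})\cup\{u\}$, a disjoint union of size $\omega(G)-1$, which yields $|{\cal D}_G(x,y)|\le n-\omega(G)+1$ and, by Theorem \ref{theokmetric}, the claim.

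The case this argument does not cover is $\omega(G)=2$, where the clique is too small for pigeonhole. In this case the desired bound reads $k\le n-1$, which is exactly the upper bound already supplied by Remark \ref{remarkKMetric}. The main obstacle I anticipate is precisely this small-clique situation: the clique-plus-pigeonhole argument only gains the extra non-distinctive vertex when $|S|\ge 3$, so the case $\omega(G)=2$ must be dispatched separately by appealing to the general bound on $k$.
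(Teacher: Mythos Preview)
Your argument is correct. Both proofs fix a maximum clique $S$, exploit that every vertex of $S\setminus\{x,y\}$ is non-distinctive for any pair $x,y\in S$, and then gain one extra non-distinctive vertex from outside $S$ before invoking Theorem~\ref{theokmetric}. The route to that extra vertex differs, however. You take an arbitrary $u\notin S$, observe via the triangle inequality that the distances $d_G(u,s)$, $s\in S$, all lie in a two-element set $\{d,d+1\}$, and then use the pigeonhole principle (requiring $\omega(G)\ge 3$) to find $x,y\in S$ equidistant from $u$; the case $\omega(G)=2$ is handled separately by Remark~\ref{remarkKMetric}. The paper instead uses maximality of $S$ to pick $v\notin S$ with a non-neighbour $u\in S$, and then splits on whether $v$ is adjacent to all of $S\setminus\{u\}$: if so, the pair is $(u,v)$ with $S\setminus\{u\}$ non-distinctive; if not, a second non-neighbour $u'\in S$ exists and the pair is $(u,u')$ with $(S\setminus\{u,u'\})\cup\{v\}$ non-distinctive. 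The paper's case split treats $\omega(G)=2$ uniformly with the rest and never needs Remark~\ref{remarkKMetric}, at the cost of sometimes choosing a pair with one vertex outside the clique. Your pigeonhole argument is slightly cleaner when $\omega(G)\ge 3$ and does not rely on adjacency to $S$ at all (the external vertex may be far from $S$), but you pay for this by having to dispatch $\omega(G)=2$ separately.
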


\begin{proof}
$\ $Let $S$ be an $\omega(G)$-clique. Since $G$ is not complete, there exists a vertex $v\notin S$ such that $N_S(v)\subsetneq S$. Let $u\in S$ with $v\not\sim u$. If $N_S(v)=S-\{u\}$, then $d(u,x)=d(v,x)=1$ for every $x\in S-\{u\}$. Thus, $\vert{\cal D}_G(u,v)\vert\le n-\omega(G)+1$.  On the other hand, if $N_S(v)\ne S-\{u\}$, then there exists $u'\in S-\{u\}$ such that $u'\not\sim v$. Thus, $d(u,v)=d(u',v)=2$ and for every $x\in S-\{u,u'\}$, $d(u,x)=d(u',x)=1$. So, $\vert{\cal D}_G(u,u')\vert\le n-\omega(G)+1$.   Therefore,  Theorem \ref{theokmetric} leads to  $k\le n-\omega(G)+1$.
\end{proof}

Examples where the previous bound is achieved are those connected graphs $G$ of order $n$ and clique number $\omega(G)=n-1$. In such a case, $n-\omega(G)+1=2$. Notice that in this case  there exists at least two twin vertices. Hence, by Corollary \ref{remark2dimesional} these graphs are $2$-metric dimensional.

The {\em girth} of a graph $G$ is the length of a shortest cycle in $G$.

\begin{theorem}\label{girth-versus-k}
$\ $Let $G$ be a graph of minimum degree $\delta(G)\ge 2$, maximum degree $\Delta(G)\ge 3$ and girth $\mathtt{g}(G)\ge 4$. If $G$ is $k$-metric dimensional, then
$$k\le n-1-(\Delta(G)-2)\sum_{i=0}^{\left\lfloor\frac{\mathtt{g}(G)}{2}\right\rfloor-2}(\delta(G)-1)^i.$$
\end{theorem}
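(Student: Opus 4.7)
The plan is to apply Theorem \ref{theokmetric}: it suffices to exhibit two distinct vertices $u_1,u_2\in V(G)$ with $|{\cal D}_G(u_1,u_2)|$ at most the claimed right-hand side. I would pick $v\in V(G)$ with $\delta(v)=\Delta(G)$ (which exists since $\Delta(G)\ge 3$), designate two of its neighbours as $u_1,u_2$, and label the remaining $\Delta(G)-2$ neighbours $z_1,\dots,z_{\Delta(G)-2}$. Immediately, $v$ fails to distinguish $u_1,u_2$ since $d(u_1,v)=d(u_2,v)=1$. The rest of the proof produces further non-distinguishing vertices, grouped as $\Delta(G)-2$ pairwise-disjoint BFS-subtrees $T_i$ hanging off the $z_i$'s, and then sums them up.

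Set $R=\lfloor \mathtt{g}(G)/2\rfloor-2$ and let $B$ denote the closed ball of radius $R+1$ around $v$ in $G$. Since $\mathtt{g}(G)\ge 4$, the subgraph of $G$ induced by $B$ is a tree, because any cycle inside $B$ would have length at most $2(R+1)=2\lfloor \mathtt{g}(G)/2\rfloor-2<\mathtt{g}(G)$. Define $T_i$ to be the subtree of this induced tree consisting of $z_i$ together with its descendants (taking $v$ as outer root, so descendants of $z_i$ are those $x\in B$ whose unique $v$-to-$x$ path in the tree starts with the edge $vz_i$), truncated at BFS-depth $R$ from $z_i$. The $T_i$'s are pairwise disjoint and disjoint from $\{v,u_1,u_2\}$, since any common vertex in two $T_i$'s (or in $T_i\cap\{u_1,u_2\}$) would, via paths through $v$, yield a cycle of length less than $\mathtt{g}(G)$. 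Inside $T_i$ the root $z_i$ has at least $\delta(G)-1$ children (its $G$-neighbours other than $v$) and every interior vertex at depth less than $R$ has at least $\delta(G)-1$ children; hence $|T_i|\ge\sum_{j=0}^{R}(\delta(G)-1)^j$.

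It remains to show that every $w\in T_i$ satisfies $d(u_1,w)=d(u_2,w)$. I argue that in fact $d(u_\epsilon,w)=d(v,w)+1$ for $\epsilon\in\{1,2\}$. The triangle inequality forces $d(u_\epsilon,w)\in\{d(v,w)-1,\,d(v,w),\,d(v,w)+1\}$, so I must rule out the first two alternatives by a cycle argument. In either case, the closed walk obtained by concatenating the edge $vu_\epsilon$, a shortest $u_\epsilon\to w$ path, the reverse of the (tree-unique) shortest $v\to z_i\to\cdots\to w$ path, and the edge $z_iv$ has length at most $2d(v,w)+1\le 2\lfloor \mathtt{g}(G)/2\rfloor-1<\mathtt{g}(G)$; moreover, its initial edge $vu_\epsilon$ occurs only once in the walk (no interior vertex on a shortest $u_\epsilon\to w$ path can equal $v$, lest $d(u_\epsilon,w)\ge 1+d(v,w)$), so the non-backtracking reduction cannot cancel it, and the reduced walk is a non-trivial non-backtracking closed walk containing a cycle of length strictly less than $\mathtt{g}(G)$---a contradiction. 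Therefore $d(u_1,w)=d(u_2,w)=d(v,w)+1$, so $\{v\}\cup\bigcup_iT_i\subseteq V(G)\setminus{\cal D}_G(u_1,u_2)$, whence $|{\cal D}_G(u_1,u_2)|\le n-1-(\Delta(G)-2)\sum_{j=0}^{\lfloor \mathtt{g}(G)/2\rfloor-2}(\delta(G)-1)^j$, and Theorem \ref{theokmetric} closes the argument. The main obstacle is this last cycle argument: keeping the strict inequality $2\lfloor \mathtt{g}(G)/2\rfloor-1<\mathtt{g}(G)$ valid for both parities of $\mathtt{g}(G)$, and carefully verifying that the edge $vu_\epsilon$ truly survives every non-backtracking reduction of the closed walk.
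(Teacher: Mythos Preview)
Your argument is correct and follows the same route as the paper: take $v$ of maximum degree, two neighbours $u_1,u_2$, and count non-distinguishing vertices in the BFS subtrees rooted at the other $\Delta(G)-2$ neighbours down to depth $\lfloor\mathtt{g}(G)/2\rfloor-2$; the paper's sets $A_0,A_1,\ldots$ are exactly the layers of your $\bigcup_i T_i$. Your write-up is in fact more careful than the paper's (which asserts disjointness, the size bound, and the equidistance $d(u_1,x)=d(u_2,x)$ without justification), though the sentence ``any cycle inside $B$ would have length at most $2(R+1)$'' is imprecise as stated---what you actually need, and what the standard argument gives, is that a non-tree edge inside $B$ would create a cycle of length at most $2(R+1)+1<\mathtt{g}(G)$.
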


\begin{proof}
$\ $Let $v\in V$ be a vertex of maximum degree in $G$. Since $\Delta(G)\ge 3$ and $\mathtt{g}(G)\ge 4$, there are at least three different vertices adjacent to $v$ and  $ N(v)$ is an independent set\footnote{An independent set or stable set is a set of vertices in a graph, no two of which are adjacent.}. Given $u_1,u_2\in N(v)$ and $i\in \{0,\ldots,\left\lfloor\frac{\mathtt{g}(G)}{2}\right\rfloor-2\}$ we define the following sets.
\begin{align*}
A_0&=N(v)-\{u_1,u_2\}.\\
A_1&=\bigcup_{x\in A_0}N(x)-\{v\}.\\
A_2&=\bigcup_{x\in A_1}N(x)-A_0.\\
&\ldots\\
A_{\left\lfloor\frac{\mathtt{g}(G)}{2}\right\rfloor-2}&=\bigcup_{x\in A_{\left\lfloor\frac{\mathtt{g}(G)}{2}\right\rfloor-3}}N(x)-A_{\left\lfloor\frac{\mathtt{g}(G)}{2}\right\rfloor-4}.
\end{align*}
Now, let $A=\{v\}\cup \left(\displaystyle\bigcup_{i=0}^{\left\lfloor\frac{\mathtt{g}(G)}{2}\right\rfloor-2}A_i\right)$. Since $\delta(G)\ge 2$, we have that $|A|\ge 1+(\Delta(G)-2)\displaystyle\sum_{i=0}^{\left\lfloor\frac{\mathtt{g}(G)}{2}\right\rfloor-2}(\delta(G)-1)^i$. Also, notice that for every vertex $x\in A$, $d(u_1,x)=d(u_2,x)$. Thus, $u_1,u_2$ can be only distinguished by themselves and at most $n-|A|-2$ other vertices. Therefore, $\vert{\cal D}_G(u_1,u_2)\vert\le  n-|A|$ and the result follows by Theorem \ref{theokmetric}.
\end{proof}

The bound of Theorem \ref{girth-versus-k} is sharp. For instance, it is attained for the graph  in Figure \ref{figUpperBound}. Since in this case $n=8$, $\delta(G)=2$, $\Delta(G)=3$ and $\mathtt{g}(G)=5$, we have that $k\le n-1-(\Delta(G)-2)\sum_{i=0}^{\left\lfloor\frac{\mathtt{g}(G)}{2}\right\rfloor-2}(\delta(G)-1)^i=6$. Table \ref{tableNearlyTwin} shows every pair of different vertices of this graph and their corresponding non-trivial distinctive vertices. Notice that  by Theorem \ref{theokmetric} the graph is $6$-metric dimensional.

\begin{figure}[!ht]
\centering
\begin{tikzpicture}[transform shape, inner sep = 1pt, outer sep = 0pt, minimum size = 20pt]
\node [draw=black, shape=circle, fill=white,text=black] (v1) at (0,0) {$v_1$};
\node [draw=black, shape=circle, fill=white,text=black] (v2) at ([shift=({1.41,0})]v1) {$v_2$};
\node [draw=black, shape=circle, fill=white,text=black] (v3) at ([shift=({1.41,0})]v2) {$v_3$};
\node [draw=black, shape=circle, fill=white,text=black] (v4) at ([shift=({45:1.41})]v3) {$v_4$};
\node [draw=black, shape=circle, fill=white,text=black] (v5) at ([shift=({135:1.41})]v4) {$v_5$};
\node [draw=black, shape=circle, fill=white,text=black] (v6) at ([shift=({-1.41,0})]v5) {$v_6$};
\node [draw=black, shape=circle, fill=white,text=black] (v7) at ([shift=({-1.41,0})]v6) {$v_7$};
\node [draw=black, shape=circle, fill=white,text=black] (v8) at ([shift=({225:1.41})]v7) {$v_8$};
\draw[black] (v1) -- (v2)  -- (v3) -- (v4) -- (v5) -- (v6) -- (v2);
\draw[black] (v6) -- (v7) -- (v8) -- (v1);
\end{tikzpicture}
\caption{A graph  that satisfies the equality in the upper bound of Theorem \ref{girth-versus-k}.}
\label{figUpperBound}
\end{figure}
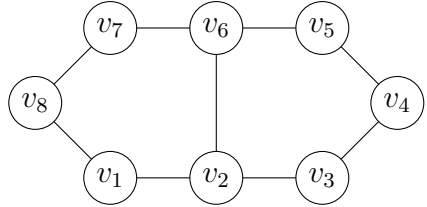

\begin{table}[h]

\centering
\begin{tabular}{|c|c|}
\hline
\mbox{$x,y$} & \mbox{${\cal D}_G^*(x,y)$}\\
\hline
 $v_1, v_3$ & $\{v_4, v_5, v_7, v_8\}$\\\hline
 $v_1, v_5$ & $\{v_2, v_4, v_6, v_8\}$\\\hline
 $v_1, v_6$ & $\{v_4, v_5, v_7, v_8\}$\\\hline
 $v_1, v_7$ & $\{v_2, v_3, v_5, v_6\}$\\\hline
 $v_1, v_8$ & $\{v_2, v_3, v_4, v_7\}$\\\hline
 $v_2, v_5$ & $\{v_1, v_3, v_4, v_8\}$\\\hline
 $v_2, v_6$ & $\{v_1, v_3, v_5, v_7\}$\\\hline
 $v_2, v_7$ & $\{v_1, v_3, v_4, v_8\}$\\\hline
 $v_3, v_4$ & $\{v_1, v_2, v_5, v_8\}$\\\hline
 $v_3, v_5$ & $\{v_1, v_2, v_6, v_7\}$\\\hline
 $v_3, v_6$ & $\{v_4, v_5, v_7, v_8\}$\\\hline
 $v_3, v_7$ & $\{v_2, v_4, v_6, v_8\}$\\\hline
 $v_4, v_5$ & $\{v_3, v_6, v_7, v_8\}$\\\hline
 $v_4, v_8$ & $\{v_1, v_3, v_5, v_7\}$\\\hline
 $v_5, v_7$ & $\{v_1, v_3, v_4, v_8\}$\\\hline
 $v_7, v_8$ & $\{v_1, v_4, v_5, v_6\}$\\
\hline
\end{tabular}
\hspace{0.5cm}
\begin{tabular}{|c|c|}
\hline
\mbox{$x,y$} & \mbox{${\cal D}_G^*(x,y)$}\\
\hline
 $v_1, v_2$ & $\{v_3, v_4, v_5, v_6, v_8\}$\\\hline
 $v_1, v_4$ & $\{v_2, v_3, v_5, v_7, v_8\}$\\\hline
 $v_2, v_3$ & $\{v_1, v_4, v_6, v_7, v_8\}$\\\hline
 $v_2, v_4$ & $\{v_1, v_5, v_6, v_7, v_8\}$\\\hline
 $v_2, v_8$ & $\{v_3, v_4, v_5, v_6, v_7\}$\\\hline
 $v_3, v_8$ & $\{v_1, v_2, v_4, v_5, v_7\}$\\\hline
 $v_4, v_6$ & $\{v_1, v_2, v_3, v_7, v_8\}$\\\hline
 $v_4, v_7$ & $\{v_1, v_3, v_5, v_6, v_8\}$\\\hline
 $v_5, v_6$ & $\{v_1, v_2, v_4, v_7, v_8\}$\\\hline
 $v_5, v_8$ & $\{v_1, v_3, v_4, v_6, v_7\}$\\\hline
 $v_6, v_7$ & $\{v_2, v_3, v_4, v_5, v_8\}$\\\hline
 $v_6, v_8$ & $\{v_1, v_2, v_3, v_4, v_5\}$\\\hline
  & \\\hline
  & \\\hline
  & \\\hline
  & \\
\hline
\end{tabular}
\caption{Pairs of vertices of the graph   in Figure \ref{figUpperBound} and their non-trivial distinctive vertices.}
\label{tableNearlyTwin}
\end{table}

\section{The $k$-metric dimension of graphs}

In this section we present some results that allow to compute the $k$-metric dimension of several families of graphs. We also give some tight bounds on the $k$-metric dimension of a graph.

\begin{theorem}[Monotony of the $k$-metric dimension]\label{theoBoundForKs}
$\ $Let $G$ be a $k$-metric dimensional graph and let $k_1,k_2$ be two integers. If $1\le k_1<k_2\le k$, then $\dim_{k_1}(G)<\dim_{k_2}(G)$.
\end{theorem}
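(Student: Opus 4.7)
The plan is to reduce the statement to the consecutive case and then exploit directly the defining inequality of the $k$-metric generator condition via Theorem \ref{theokmetric}.

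First I would observe that it suffices to prove the one-step version: whenever $1\le j$ and $j+1\le k$, one has $\dim_{j}(G)<\dim_{j+1}(G)$. The general statement then follows by chaining the strict inequalities
\[
\dim_{k_1}(G)<\dim_{k_1+1}(G)<\cdots<\dim_{k_2}(G),
\]
each link of which is legitimate because $k_2\le k$ guarantees that all of $\dim_{k_1}(G),\dots,\dim_{k_2}(G)$ are defined (the existence of a $j$-metric basis for every $j\le k$ was noted right after the definition of a $k$-metric dimensional graph).

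Next I would take a $(j+1)$-metric basis $B$ of $G$, which exists because $j+1\le k$, so $|B|=\dim_{j+1}(G)$. By the definition of a $(j+1)$-metric generator, for every pair of distinct vertices $u,v\in V(G)$ the set $B$ contributes at least $j+1$ distinctive vertices, i.e.\ $|B\cap \mathcal{D}_G(u,v)|\ge j+1$. Now pick any vertex $w\in B$ (note $|B|\ge j+1\ge 2$, so $B$ is nonempty) and consider $B'=B-\{w\}$. Removing a single element can decrease the count $|B\cap \mathcal{D}_G(u,v)|$ by at most one, so
\[
|B'\cap \mathcal{D}_G(u,v)|\;\ge\; |B\cap \mathcal{D}_G(u,v)|-1\;\ge\; j
\]
for every pair of distinct $u,v\in V(G)$. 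Hence $B'$ is a $j$-metric generator of $G$, and therefore
\[
\dim_{j}(G)\le |B'|=\dim_{j+1}(G)-1<\dim_{j+1}(G),
\]
which is exactly the one-step inequality.

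There is no real obstacle: the proof is essentially a one-line consequence of the pigeonhole-style inequality $|B\cap \mathcal{D}_G(u,v)|\ge j+1$ combined with Theorem \ref{theokmetric}. The only subtlety worth flagging is ensuring that $\dim_{j+1}(G)$ is well defined at every step of the chain, which is precisely what the hypothesis $k_2\le k$ together with the definition of $k$-metric dimensional graph guarantees.
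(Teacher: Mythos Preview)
Your proof is correct and follows essentially the same approach as the paper: take a $(j+1)$-metric basis, delete any single vertex, observe that the remainder is a $j$-metric generator, and iterate. The only quibble is that you twice cite Theorem~\ref{theokmetric}, but neither the paper's argument nor yours actually uses it; the inequality $|B\cap \mathcal{D}_G(u,v)|\ge j+1$ is simply the definition of a $(j+1)$-metric generator, so that reference should be dropped.
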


\begin{proof}
$\ $Let $B$ be a $k$-metric basis of $G$. Let $x\in B$. Since all pairs of different vertices in $V(G)$ are distinguished by at least $k$ vertices of $B$, we have that $B-\{x\}$ is a $(k-1)$-metric generator for $G$ and, as a consequence, $\dim_{k-1}(G)\le \left|B-\{x\}\right|<|B|=\dim_{k}(G)$. Proceeding analogously, we obtain that $\dim_{k-1}(G)>\dim_{k-2}(G)$ and, by a finite repetition of the process we obtain the result.
\end{proof}

\begin{corollary}\label{firstConsequenceMonotony}
$\ $Let $G$ be a $k$-metric dimensional graph of order $n$.
\begin{enumerate}[{\rm (i)}]
\item $\ $For every $r\in\{1,\ldots, k\}$, $\dim_r(G)\ge \dim(G)+(r-1).$
\item $\ $For every $r\in\{1,\ldots, k-1\}$,  $\dim_r(G)<n$.
\item $\ $If $G\not\cong P_n$, then  for any $r\in \{1,\ldots,k\}$,
$\dim_{r}(G)\geq r+1.$
\end{enumerate}
\end{corollary}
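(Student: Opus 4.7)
The plan is to treat the three parts in order, using Theorem \ref{theoBoundForKs} as the central tool. Recall that $1$-metric generators are precisely the classical metric generators, so $\dim_1(G)=\dim(G)$.

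For part (i), I would iterate Theorem \ref{theoBoundForKs} starting from the base case $r=1$. Since $1\le 1<2<\cdots<r\le k$, the theorem yields the strict chain
\begin{equation*}
\dim(G)=\dim_1(G)<\dim_2(G)<\cdots<\dim_r(G).
\end{equation*}
Because each consecutive inequality is strict between integers, every step contributes at least $1$, and summing $r-1$ such increments gives $\dim_r(G)\ge \dim(G)+(r-1)$.

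For part (ii), I would first observe that $V(G)$ itself is a $k$-metric generator for $G$ (each pair of vertices is trivially distinguished by at least itself, and more generally since $G$ is $k$-metric dimensional, a $k$-metric basis exists as a subset of $V(G)$), so $\dim_k(G)\le n$. Then, for any $r\in\{1,\ldots,k-1\}$, applying Theorem \ref{theoBoundForKs} with $k_1=r$ and $k_2=k$ gives $\dim_r(G)<\dim_k(G)\le n$, whence $\dim_r(G)<n$.

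For part (iii), I would combine part (i) with the well-known characterization that $\dim(G)=1$ if and only if $G\cong P_n$; consequently $G\not\cong P_n$ implies $\dim(G)\ge 2$. Substituting into the bound obtained in (i) gives $\dim_r(G)\ge \dim(G)+(r-1)\ge 2+(r-1)=r+1$, as required. The only step that is not purely mechanical is the appeal to the classical fact that paths are the unique graphs of metric dimension one; I would either cite this standard result or note that for $r=1$ it can be established directly by checking that any graph with $\dim(G)=1$ must have a vertex whose distance vector takes $n$ distinct values, forcing the graph to be a path. The remaining algebra is immediate from Theorem \ref{theoBoundForKs}.
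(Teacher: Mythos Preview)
Your proposal is correct and matches the paper's intent: the paper states this result as an immediate corollary of Theorem~\ref{theoBoundForKs} without giving any proof, and your derivations of (i)--(iii) from the monotony theorem (together with the classical fact from \cite{Chartrand2000} that $\dim(G)=1$ iff $G\cong P_n$, which the paper itself invokes right afterward in Proposition~\ref{theoPath2}) are exactly the intended justifications.
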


\begin{proposition}\label{theoPath2}
$\ $Let $G$ be a connected graph of order $n\geq 2$. Then $\dim_{2}(G)=2$ if and only if $G\cong P_{n}$.
\end{proposition}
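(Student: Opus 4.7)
The plan is to handle the two directions separately; the forward direction is short and the reverse direction carries all the content. For sufficiency, I would check directly that the set $S$ consisting of the two endpoints of $P_n$ is a $2$-metric generator. Writing the path as $v_1 v_2 \cdots v_n$ and taking $S=\{v_1,v_n\}$, every vertex $v_i$ has metric representation $(i-1,\,n-i)$ with respect to $S$, so for any $i\ne j$ both coordinates differ and both endpoints distinguish the pair $v_i,v_j$. Combined with the trivial lower bound $\dim_2(G)\ge 2$ (every $k$-metric generator has at least $k$ elements), this gives $\dim_2(P_n)=2$.

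For necessity, I assume $\dim_2(G)=2$ and let $S=\{u,v\}$ be a $2$-metric basis. The crucial initial observation is that since $|S|=2$ and every pair of distinct vertices must be distinguished by at least two elements of $S$, both $u$ and $v$ must individually distinguish every pair. In particular, the map $x\mapsto d_G(x,u)$ is injective on $V(G)$. Its image is a set of $n$ distinct nonnegative integers each bounded by $n-1$, so it must equal $\{0,1,\ldots,n-1\}$. Consequently there is a unique vertex $w$ with $d_G(u,w)=n-1$, and therefore $G$ has diameter exactly $n-1$.

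Finally I would deduce that $G\cong P_n$ from the diameter condition. Any shortest $u$-$w$ path has length $n-1$ and uses $n$ distinct vertices, hence exhausts $V(G)$ and is a Hamiltonian path $u=x_0,x_1,\ldots,x_{n-1}=w$. If there were any additional edge $x_ix_j$ with $j\ge i+2$, then one would obtain a $u$-$w$ walk of length at most $i+1+(n-1-j)\le n-2$, contradicting $d_G(u,w)=n-1$. Hence $G$ has no chords and $G\cong P_n$.

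I do not expect a serious obstacle: the argument is short and the only delicate point is the first one, namely recognising that a $2$-metric basis of cardinality exactly two forces \emph{each} of the two landmarks to resolve all pairs by itself (since ``at least $k=2$'' out of the two must distinguish each pair). Once that reduction is in hand, the injectivity of $d_G(\cdot,u)$ immediately identifies the diameter with $n-1$, and the classical observation that a connected graph of order $n$ with diameter $n-1$ is a path finishes the proof.
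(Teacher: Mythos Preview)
Your argument is correct, but it takes a different route from the paper.

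For the necessity direction the paper invokes two earlier facts: the monotonicity inequality $\dim_2(G)\ge \dim(G)+1$ (Corollary~\ref{firstConsequenceMonotony}(i)) and the classical characterization $\dim(G)=1\iff G\cong P_n$ due to Chartrand \textit{et al.} From $\dim_2(G)=2$ the paper deduces $\dim(G)=1$ and hence $G\cong P_n$ in one line. You instead give a self-contained argument: a $2$-metric basis of size exactly two forces each landmark $u$ to resolve every pair on its own, hence $x\mapsto d_G(x,u)$ is injective, the diameter is $n-1$, and a connected graph on $n$ vertices with diameter $n-1$ must be $P_n$. This is essentially the standard proof of the Chartrand \textit{et al.} result, carried out inline rather than cited. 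The paper's approach is shorter because it leverages results already established (or quoted) in the article; your approach has the merit of being completely elementary and not relying on any external reference. Both are valid, and your reduction ``$|S|=2$ and each pair needs two distinguishers $\Rightarrow$ each element of $S$ is itself a resolving set'' is exactly the content of the monotonicity step the paper uses, just phrased concretely.

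For sufficiency the two arguments are essentially the same: both check that the two leaves of $P_n$ form a $2$-metric generator and combine this with $\dim_2(G)\ge 2$.
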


\begin{proof} $\;$ It was shown in 
 \cite{Chartrand2000}   that $\dim(G)=1$ if and only if $G\cong P_{n}$.

(Necessity) If $\dim_{2}(G)=2$, then by Corollary \ref{firstConsequenceMonotony} (i) we have that $\dim(G)=1$, {\it i.e.}, $$2=\dim_{2}(G)\ge \dim(G)+1\ge  2.$$
Hence, $G$ must be isomorphic to a path graph.

(Sufficiency) By Corollary \ref{firstConsequenceMonotony} (i) we have $\dim_2(P_n)\ge \dim(P_n)+1=2$ and, since the leaves of $P_n$ distinguish every pair of different vertices of $P_n$, we conclude that $\dim_2(P_n)=2$.
\end{proof}


Let ${\cal D}_k(G)$ be the set obtained as the union of the sets of distinctive vertices ${\cal D}_G(x,y)$ whenever $\vert{\cal D}_G(x,y)\vert=k$, {\it i.e.},
$${\cal D}_k(G)=\bigcup_{\vert {\cal D}_G(x,y)\vert=k}{\cal D}_G(x,y).$$

\begin{remark}\label{remTauk}
$\ $If $G$ is a $k$-metric dimensional graph, then $\dim_{k}(G)\geq \vert {\cal D}_k(G)\vert$.
\end{remark}

\begin{proof}
$\ $Since every pair of different  vertices $x,y$ is distinguished only by the elements of ${\cal D}_G(x,y)$, if
$\vert {\cal D}_G(x,y)\vert =k$, then for any $k$-metric basis $B$ we have
${\cal D}_G(x,y)\subseteq B$ and, as a consequence,
${\cal D}_k(G)\subseteq B$. Therefore, the result follows.
\end{proof}

The bound given in  Remark \ref{remTauk} is tight. For instance, in Proposition \ref{propTauk} we will show   that there exists a family of trees attaining this bound for every $k$. Other examples can be derived from the following result.

\begin{proposition}\label{theoDimkn}
$\ $Let $G$ be a $k$-metric dimensional graph of order $n$. Then $\dim_k(G)=n$ if and only if $V(G)={\cal D}_k(G)$.
\end{proposition}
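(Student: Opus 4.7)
The plan is to prove the two directions separately, with the sufficiency being a one-line application of Remark \ref{remTauk} and the necessity handled by a contrapositive argument.

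For sufficiency, suppose $V(G)={\cal D}_k(G)$. By Remark \ref{remTauk} we already have $\dim_k(G)\ge |{\cal D}_k(G)|=n$. For the reverse inequality, I would note that since $G$ is $k$-metric dimensional, Theorem \ref{theokmetric} gives $|{\cal D}_G(x,y)|\ge k$ for every pair of distinct vertices $x,y$, so $V(G)$ is itself a $k$-metric generator, giving $\dim_k(G)\le n$. Combining these yields $\dim_k(G)=n$.

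For necessity, I would argue the contrapositive: assume $V(G)\ne{\cal D}_k(G)$ and show $\dim_k(G)\le n-1$. Pick any $v\in V(G)\setminus{\cal D}_k(G)$; the plan is to prove that $S=V(G)\setminus\{v\}$ is a $k$-metric generator for $G$. Fix an arbitrary pair of distinct vertices $x,y\in V(G)$ and split into two cases based on $|{\cal D}_G(x,y)|$. If $|{\cal D}_G(x,y)|=k$, then by definition ${\cal D}_G(x,y)\subseteq{\cal D}_k(G)$, and since $v\notin{\cal D}_k(G)$ we have $v\notin{\cal D}_G(x,y)$, hence ${\cal D}_G(x,y)\cap S={\cal D}_G(x,y)$ still has cardinality $k$. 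If instead $|{\cal D}_G(x,y)|\ge k+1$ (Theorem \ref{theokmetric} rules out smaller values), then removing $v$ can drop the count by at most one, so $|{\cal D}_G(x,y)\cap S|\ge k$. In either case the pair $x,y$ is distinguished by at least $k$ vertices of $S$, so $S$ is a $k$-metric generator of cardinality $n-1$, giving $\dim_k(G)\le n-1$, which contradicts $\dim_k(G)=n$.

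There is no real obstacle here; the argument is essentially bookkeeping with the definition of ${\cal D}_k(G)$. The only mildly subtle point is observing that the two cases $|{\cal D}_G(x,y)|=k$ and $|{\cal D}_G(x,y)|>k$ are the only possibilities (guaranteed by Theorem \ref{theokmetric}), and that the definition of ${\cal D}_k(G)$ as the union of \emph{all} distinctive sets of minimum cardinality $k$ is exactly what forces $v\notin{\cal D}_G(x,y)$ in the tight case.
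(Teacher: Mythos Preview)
Your proof is correct and follows essentially the same approach as the paper: both directions use Remark \ref{remTauk} together with the observation that removing a vertex outside ${\cal D}_k(G)$ from $V(G)$ still leaves a $k$-metric generator. The paper compresses your two-case analysis into the single line ``$|{\cal D}_G(a,b)-\{x\}|\ge k$ for every $a,b$'', but the underlying reasoning is identical.
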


\begin{proof}
$\ $Suppose that $V(G)={\cal D}_k(G)$.  Now, since every $k$-metric dimensional graph $G$ satisfies that $\dim_k(G)\le n$, by Remark \ref{remTauk} we obtain that $\dim_k(G)=n$.

On the contrary, let $\dim_{k}(G)=n$. Note that for every
$a,b\in  V(G)$, we have $\vert {\cal D}_G(a,b)\vert \ge k$. If there exists at least one vertex $x\in V(G)$ such that $x\notin {\cal D}_k(G)$, then for every $a,b\in  V(G)$, we have $\vert {\cal D}_G(a,b)-\{x\}\vert \ge k$ and, as a consequence,  $V(G)-\{x\}$ is a  $k$-metric generator for $G$, which is a contradiction. Therefore, $V(G)={\cal D}_k(G)$.
\end{proof}

\begin{corollary}\label{remarkDim2n}
$\ $Let $G$ be a connected graph of order $n\geq 2$. Then $\dim_2(G)=n$ if and only if every vertex is a twin.
\end{corollary}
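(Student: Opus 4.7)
The plan is to derive the corollary directly from Proposition \ref{theoDimkn} applied with $k=2$, by showing that $\mathcal{D}_2(G)$ is exactly the set of vertices which have a twin. The key observation is that for any pair of distinct vertices $x,y \in V(G)$ one always has $\{x,y\}\subseteq \mathcal{D}_G(x,y)$, so $|\mathcal{D}_G(x,y)|\ge 2$, with equality if and only if $\mathcal{D}_G(x,y)=\{x,y\}$, i.e.\ $\mathcal{D}_G^*(x,y)=\emptyset$, which by the definition given before Corollary \ref{remark2dimesional} is equivalent to $x,y$ being twins. Consequently $z\in \mathcal{D}_2(G)$ if and only if $z$ belongs to some twin pair.

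For the sufficiency ($\Leftarrow$), I would argue: if every vertex is a twin, then in particular $G$ has twin vertices, so by Corollary \ref{remark2dimesional}, $G$ is $2$-metric dimensional, and Proposition \ref{theoDimkn} is applicable with $k=2$. For each $z\in V(G)$, picking a twin partner $z'$ yields $|\mathcal{D}_G(z,z')|=2$ and $z\in \mathcal{D}_G(z,z')\subseteq \mathcal{D}_2(G)$, so $V(G)=\mathcal{D}_2(G)$ and hence $\dim_2(G)=n$.

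For the necessity ($\Rightarrow$), assume $\dim_2(G)=n$. The first step is to verify that $G$ is $2$-metric dimensional, so that Proposition \ref{theoDimkn} can be invoked. If instead $G$ were $k$-metric dimensional for some $k\ge 3$, then Theorem \ref{theoBoundForKs} (monotony) applied to $k_1=2<k_2=k$ would give $\dim_2(G)<\dim_k(G)$; combined with the fact that $V(G)$ is itself a $k$-metric generator (so $\dim_k(G)\le n$), this would force $\dim_2(G)\le n-1$, contradicting $\dim_2(G)=n$. Therefore $G$ is $2$-metric dimensional, and Proposition \ref{theoDimkn} gives $V(G)=\mathcal{D}_2(G)$. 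By the characterization of $\mathcal{D}_2(G)$ above, every $z\in V(G)$ belongs to some $\mathcal{D}_G(x,y)=\{x,y\}$ with $x,y$ twins, forcing $z\in\{x,y\}$, so $z$ is a twin.

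The only genuinely delicate point is this reduction to the $2$-metric dimensional case in the necessity direction; once that is in place, the rest is just the observation that $|\mathcal{D}_G(x,y)|=2$ is synonymous with being twins, which is essentially already encoded in the definitions and in Corollary \ref{remark2dimesional}. No further calculation is needed.
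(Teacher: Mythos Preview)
Your proposal is correct and follows precisely the derivation the paper intends: Corollary~\ref{remarkDim2n} is stated without proof as an immediate consequence of Proposition~\ref{theoDimkn}, and you have supplied the natural details, namely that $|\mathcal{D}_G(x,y)|=2$ is equivalent to $x,y$ being twins (so that $\mathcal{D}_2(G)$ is exactly the set of twin vertices). Your extra care in checking that $G$ is $2$-metric dimensional before invoking Proposition~\ref{theoDimkn} is a legitimate point the paper glosses over; note that you could also cite Corollary~\ref{firstConsequenceMonotony}~(ii) directly in place of your monotony argument.
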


We will show other examples of graphs that satisfy Proposition \ref{theoDimkn} for $k\ge 3$. To this end, we recall that the \emph{join graph} $G+H$ of the graphs $G=(V_{1},E_{1})$ and $H=(V_{2},E_{2})$ is the graph with vertex set $V(G+H)=V_{1}\cup V_{2}$ and edge set $E(G+H)=E_{1}\cup E_{2}\cup \{uv\,:\,u\in V_{1},v\in V_{2}\}$. We give now some examples of graphs satisfying the assumptions of Proposition \ref{theoDimkn}. Let $W_{1,n}=C_n+K_1$ be the \emph{wheel graph} and $F_{1,n}=P_n+K_1$ be the \emph{fan graph}. The vertex of $K_1$ is called the central vertex of the wheel or the fan, respectively. Since $V(F_{1,4})={\cal D}_3(F_{1,4})$ and $V(W_{1,5})={\cal D}_4(W_{1,5})$, by Proposition \ref{theoDimkn} we have that $\dim_3(F_{1,4})=5$ and $\dim_4(W_{1,5})=6$,  respectively.

Given two non-trivial graphs $G$ and $H$, it holds that any pair of twin vertices $x,y\in V(G)$ or $x,y\in V(H)$ are also twin vertices in $G+H$. As a direct consequence of Corollary \ref{remarkDim2n}, the next result holds.

\begin{remark}\label{coroJoinGG}
$\ $Let $G$ and $H$ be two nontrivial graphs of order $n_{1}$ and $n_{2}$, respectively. If all the vertices of $G$ and $H$ are twin vertices, then $G+H$ is $2$-metric dimensional and $$\dim_{2}(G+H)=n_{1}+n_{2}.$$
\end{remark}

Note that in Remark \ref{coroJoinGG}, the graphs $G$ and $H$ could be non connected. Moreover,  $G$ and $H$ could be nontrivial empty graphs. For instance, $N_{r}+N_{s}$, where $N_r$, $N_s$, $r,s>1$, are empty graphs, is the complete bipartite graph $K_{r,s}$ which satisfies that $\dim_{2}(K_{r,s})=r+s$.

\subsection{Bounding the $k$-metric dimension of graphs}

We begin this subsection with a necessary definition of the \textit{twin  equivalence relation} ${\cal R}$ on $V(G)$ as follows:
$$x {\cal R} y \longleftrightarrow  N_G[x]=N_G[y] \; \; \mbox{\rm or } \; N_G(x)=N_G(y).$$

We have three possibilities for each twin equivalence class $U$:

\begin{enumerate}[(a)]
\item $\ $ $U$ is singleton, or
\item $\ $ $N_G(x)=N_G(y)$, for any $x,y\in U$ (and case (a) does not apply), or
\item $\ $ $N_G[x]=N_G[y]$, for any $x,y\in U$ (and case (a) does not apply).
\end{enumerate}

We will  refer to the type (c) classes as the \textit{true twin  equivalence classes} \textit{i.e.,} $U$ is a true twin  equivalence class if and only if $U$ is not singleton and $N_G[x]=N_G[y]$, for any $x,y\in U$.

Let us see three different examples where every vertex is a twin. An example of a graph where every equivalence class is a true twin equivalence class is $K_r+(K_s\cup K_t)$, $r,s,t\ge 2$. In this case, there are three equivalence
classes composed by $r,s$ and $t$  true twin vertices, respectively. As an example where no class is composed by true twin vertices we take the complete bipartite graph $K_{r,s}$, $r,s\ge 2$. Finally, the graph $K_r+N_s$, $r,s\ge 2$, has two equivalence classes and one of them is composed by $r$ true twin vertices. On the other hand, $K_1+(K_r\cup N_s)$, $r,s\ge 2$, is an example where one class is singleton, one class is composed by true twin vertices and the other one is composed by false twin vertices.

In general, we can state the following result.

\begin{remark}
$\ $Let $G$ be a connected graph  and let  $U_1,U_2,\ldots,U_t$ be the  non-singleton twin equivalence classes of $G$. Then
$$\dim_2(G)\ge \sum_{i=1}^t|U_i|.$$
\end{remark}

\begin{proof}
$\ $Since for  two different vertices $x,y\in V(G)$  we have that $ {\cal D}_2(x,y)=\{x,y\}$  if and only if there exists an equivalence class $U_i$ such that $x,y\in U_i$,  we deduce $${\cal D}_2(G)= \bigcup_{i=1}^t U_i.$$ Therefore,  by Remark \ref{remTauk} we conclude the proof.
\end{proof}

Notice that the result above leads to Corollary \ref{remarkDim2n}, so this bound is tight. Now  we consider the connected graph $G$ of order $r+s$ obtained from a null graph $N_r$ of order $r\ge 2$ and a path $P_s$ of order $s\ge 1$ by connecting every vertex of $N_r$ to a given leaf of $P_s$. In this case, there are $s$ singleton classes and one class, say $U_1$, of cardinality $r$. By the previous result we have $\dim_2(G)\ge \vert U_1\vert=r$ and, since $U_1$ is a 2-metric generator for $G$, we conclude  that $\dim_2(G)=r.$

We recall that the {\em strong product graph} $G\boxtimes H$ of two graphs $G=(V_{1},E_{1})$ and $H=(V_{2},E_{2})$ is the graph with vertex set $V\left(G\boxtimes H\right)=V_{1}\times V_{2}$, where two distinct vertices $(x_{1},x_{2}),(y_{1},y_{2})\in V_{1}\times V_{2}$ are adjacent in $G\boxtimes H$ if and only if one of the following holds.
\begin{itemize}
\item $\ $ $x_{1}=y_{1}$ and $x_{2}\sim y_{2}$, or
\item $\ $ $x_{1}\sim y_{1}$ and $x_{2}=y_{2}$, or
\item $\ $ $x_{1}\sim y_{1}$ and $x_{2}\sim y_{2}$.
\end{itemize}

\begin{theorem}\label{Generalizacompleto-por-G}
$\ $Let $G$ and $H$ be two nontrivial connected graphs of order $n$ and $n'$, respectively. Let $U_1,U_2,\ldots,U_{t}$ be the true twin equivalence classes of $G$. Then $$\dim_2(G\boxtimes H)\ge n'\sum_{i=1}^t|U_i|.$$
Moreover, if every vertex of $G$ is a true twin, then $$\dim_2(G\boxtimes H)= n n'.$$
\end{theorem}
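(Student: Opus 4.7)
The plan is to exploit the fact that true twinship in $G$ lifts to true twinship in the strong product $G \boxtimes H$, and then apply the observation (already used around Corollary \ref{remarkDim2n}) that twins must both belong to every $2$-metric basis.

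First, I would establish the key structural fact: for any vertex $(a,b) \in V(G \boxtimes H)$, the closed neighborhood factors as $N_{G \boxtimes H}[(a,b)] = N_G[a] \times N_H[b]$, which is a direct verification from the definition of the adjacency in the strong product. Consequently, if $x,y \in V(G)$ are true twins (so $N_G[x] = N_G[y]$), then for every $h \in V(H)$,
$$N_{G \boxtimes H}[(x,h)] = N_G[x] \times N_H[h] = N_G[y] \times N_H[h] = N_{G \boxtimes H}[(y,h)],$$
so $(x,h)$ and $(y,h)$ are true twins in $G \boxtimes H$. By the remark that $x,y$ are twins iff ${\cal D}_G^*(x,y) = \emptyset$, this gives ${\cal D}_{G \boxtimes H}((x,h),(y,h)) = \{(x,h),(y,h)\}$.

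For the lower bound, let $W$ be a $2$-metric basis of $G \boxtimes H$. For each true twin equivalence class $U_i$ of $G$ and each $h \in V(H)$, any two distinct vertices $(x,h), (y,h)$ with $x,y \in U_i$ are distinguished by exactly two vertices, namely themselves; hence both must lie in $W$. Since this applies to every unordered pair within $U_i \times \{h\}$, we deduce $U_i \times \{h\} \subseteq W$. The sets $U_i \times \{h\}$ are pairwise disjoint as $(i,h)$ varies (different $i$ give different first coordinates, different $h$ give different second coordinates), so
$$|W| \;\ge\; \sum_{i=1}^t \sum_{h \in V(H)} |U_i| \;=\; n' \sum_{i=1}^t |U_i|,$$
which is the desired inequality.

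For the moreover part, if every vertex of $G$ is a true twin, then the classes $U_1,\ldots,U_t$ partition $V(G)$, so $\sum_{i=1}^t |U_i| = n$, giving $\dim_2(G \boxtimes H) \ge nn'$. The matching upper bound is immediate because $V(G \boxtimes H)$ is itself a $2$-metric generator: any two distinct vertices $u,v$ are distinguished by $u$ (distance $0$ versus positive) and by $v$ (symmetrically), so $\dim_2(G \boxtimes H) \le |V(G \boxtimes H)| = nn'$. Combining the two bounds yields equality. The only step that requires a little care is the closed-neighborhood identity for the strong product and the subsequent verification that equality of closed neighborhoods forces the pair to be distinguished only by itself; everything else is a bookkeeping argument on disjoint sets.
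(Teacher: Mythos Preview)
Your proof is correct and follows essentially the same approach as the paper: both verify the closed-neighborhood identity $N_{G\boxtimes H}[(a,b)]=N_G[a]\times N_H[b]$, deduce that true twins in $G$ lift to true twins in $G\boxtimes H$, and conclude the lower bound from the fact that twin pairs are distinguished only by themselves. The paper packages the last step by invoking Remark~\ref{remTauk} (so that ${\cal D}_2(G\boxtimes H)\supseteq\bigcup_i U_i\times V(H)$), whereas you spell out the same containment directly at the level of a $2$-metric basis; this is a cosmetic difference only.
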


\begin{proof}
$\ $For any two vertices $a,c\in U_i$ and $b\in V(H)$,
\begin{align*}N_{G\boxtimes H}[(a,b)]&=N_G[a]\times N_H[b]\\
&= N_G[c]\times N_H[b]\\
&=N_{G\boxtimes H}[(c,b)].
\end{align*}
Thus, $(a,b)$ and $(c,b)$ are true twin vertices.
Hence, $${\cal D}_2(G\boxtimes H)\supseteq \bigcup_{i=1}^t U_i\times V(H).$$ Therefore, by Remark \ref{remTauk} we conclude $\dim_2(G\boxtimes H)\ge n'\displaystyle\sum_{i=1}^t|U_i|.$

Finally, if every vertex of $G$ is a true twin, then $ \displaystyle\bigcup_{i=1}^t U_i=V(G)$ and, as a consequence, we obtain $\dim_2(G\boxtimes H)= nn'.$
\end{proof}

We now present a lower bound for the $k$-metric dimension of a $k'$-metric dimensional graph $G$ with $k'\ge k$. To this end, we require the use of the following function for any exterior major vertex $w\in V(G)$ having terminal degree greater than one, {\it i.e.}, $w\in \mathcal{M}(G)$. Notice that this function uses the concepts already defined in Section \ref{SectionBoundK-dimensional}.
Given an integer $r\le k'$,

\[
I_r(w)=\left\{ \begin{array}{ll}
\left(\ter(w)-1\right)\left(r-l(w)\right)+l(w), & \mbox{if } l(w)\le\lfloor\frac{r}{2}\rfloor,\\
& \\
\left(\ter(w)-1\right)\lceil\frac{r}{2}\rceil+\lfloor\frac{r}{2}\rfloor, & \mbox{otherwise.}
\end{array}
\right.
\]

In Figure \ref{example-G-*} we give an example of a graph $G$, which helps to clarify the notation above. 
Since every graph is at least $2$-metric dimensional, we can consider the integer $r=2$ and we have the following.
\begin{itemize}
\item $\ $Since $l(v_3)=1\le \left\lfloor\frac{r}{2}\right\rfloor$, it follows that $I_r(v_3)=\left(\ter(v_3)-1\right)\left(r-l(v_3)\right)+l(v_3)=(3-1)(2-1)+1=3$.
\item $\ $Since $l(v_5)=1\le \left\lfloor\frac{r}{2}\right\rfloor$, it follows that $I_r(v_5)=\left(\ter(v_5)-1\right)\left(r-l(v_5)\right)+l(v_5)=(2-1)(2-1)+1=2$.
\item $\ $Since $l(v_{15})=2>\left\lfloor\frac{r}{2}\right\rfloor$, it follows that $I_r(v_{15})=\left(\ter(v_{15})-1\right)\left\lceil\frac{r}{2}\right\rceil+\left\lfloor\frac{r}{2}\right\rfloor=(2-1)\left\lceil\frac{2}{2}\right\rceil+\left\lfloor\frac{2}{2}\right\rfloor=2$.
\end{itemize}
Therefore, according to the result below, $\dim_2(G)\geq 3+2+2=7$.

\begin{theorem}\label{theoMuk}
$\ $If $G$ is a $k$-metric dimensional graph such that $|\mathcal{M}(G)|\ge 1$, then for every $r\in \{1,\ldots,k\}$,
$$\dim_{r}(G)\geq \sum_{w\in \mathcal{M}(G)}I_{r}(w).$$
\end{theorem}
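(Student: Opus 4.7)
The plan is to fix an $r$-metric basis $B$ of $G$ and, for each $w\in \mathcal{M}(G)$, bound from below the number of vertices of $B$ that lie on the terminal paths attached to $w$. Pairwise disjointness of these terminal paths after removing the common endpoint $w$ then lets us sum the local bounds over $w$ without double counting, yielding the claimed global inequality.

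Fix $w\in\mathcal{M}(G)$ with terminal vertices $u_1,\ldots,u_t$, where $t=\ter(w)\ge 2$. Set $l_j=l(u_j,w)$, $P_j^w=V(P(u_j,w))\setminus\{w\}$ (so $|P_j^w|=l_j$), and let $x_i^j$ denote the vertex of $P_j^w$ at distance $i$ from $w$. Every internal vertex of a terminal path has degree two in $G$: a vertex of degree at least three on $P(u_j,w)$ would itself be a major vertex strictly closer to $u_j$ than $w$, contradicting the definition of a terminal vertex. A short case analysis based on this degree restriction shows that the sets $P_j^w$ are pairwise disjoint, both as $j$ varies with $w$ fixed and as $w$ varies over $\mathcal{M}(G)$.

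The key distance identity is that for $j\ne s$, $1\le i\le\min(l_j,l_s)$, and any vertex $v\notin P_j^w\cup P_s^w$, a shortest path from $v$ to either $x_i^j$ or $x_i^s$ must traverse $w$, so $d_G(x_i^j,v)=i+d_G(w,v)=d_G(x_i^s,v)$. Hence $\mathcal{D}_G(x_i^j,x_i^s)\subseteq P_j^w\cup P_s^w$, and since $B$ is an $r$-metric generator, writing $b_j=|B\cap P_j^w|$ we obtain
$$b_j+b_s=|B\cap(P_j^w\cup P_s^w)|\ge r\quad\text{for all }j\ne s,$$
in addition to the trivial $0\le b_j\le l_j$.

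The proof then reduces to the combinatorial estimate $\sum_{j=1}^t b_j\ge I_r(w)$ subject to these constraints; summing over $w\in\mathcal{M}(G)$ and using disjointness delivers $\dim_r(G)=|B|\ge\sum_{w\in\mathcal{M}(G)}I_r(w)$. Relabel so that $l_1=l(w)$. If $l(w)\le\lfloor r/2\rfloor$, using $b_s\ge r-b_1$ for $s\ne 1$,
$$\sum_j b_j\ge b_1+(t-1)(r-b_1)\ge l_1+(t-1)(r-l_1)=I_r(w),$$
because $b_1\le l_1$ and the map $x\mapsto x+(t-1)(r-x)$ is non-increasing for $t\ge 2$. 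If $l(w)>\lfloor r/2\rfloor$, two indices with values below $\lceil r/2\rceil$ would contradict $b_j+b_s\ge r$, so at most one $b_j$ falls below $\lceil r/2\rceil$; if none does then $\sum_j b_j\ge t\lceil r/2\rceil\ge I_r(w)$ trivially, otherwise call the low value $b_1$, note $b_1\le\lfloor r/2\rfloor$, and apply the same monotonicity to conclude $\sum_j b_j\ge\lfloor r/2\rfloor+(t-1)\lceil r/2\rceil=I_r(w)$. The main obstacle is this combinatorial step, particularly the case $l(w)>\lfloor r/2\rfloor$ where the branch-length constraints are inactive and the bound must be pinned by the pairwise sum constraints alone.
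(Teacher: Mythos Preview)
Your proof is correct and follows essentially the same approach as the paper's: fix an $r$-metric basis, observe that for two neighbours of $w$ lying on distinct terminal paths the distinctive vertex set is contained in the union of those two paths, deduce the pairwise constraints $b_j+b_s\ge r$, optimise to get $\sum_j b_j\ge I_r(w)$, and sum over $w$ using disjointness of the terminal paths. The only real difference is in the combinatorial step: the paper lets $x=\min_j b_j$, asserts $x\le\min\{l(w),\lfloor r/2\rfloor\}$ by appealing to minimality of the basis, and then minimises $(\ter(w)-1)(r-x)+x$; you instead split on whether $l(w)\le\lfloor r/2\rfloor$ and, in the second case, use a pigeonhole observation (at most one $b_j$ can be below $\lceil r/2\rceil$). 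Your version has the small advantage that it never invokes minimality of $B$, so the inequality in fact holds for every $r$-metric generator, not just bases.
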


\begin{proof}
$\ $Let $S$ be an $r$-metric basis of $G$. Let $w\in \mathcal{M}(G)$ and let $u_{i},u_{s}$ be two different terminal vertices of $w$. Let $u'_{i},u'_{s}$ be the vertices adjacent to $w$ in the paths $P(u_{i},w)$ and $P(u_{s},w)$, respectively. Notice that  ${\cal D}_G(u'_{i},u'_{s})=$ $V\left(P(u_{i},w,u_{s})\right)-\{w\}$ and, as a consequence, it follows that $\left|S\cap \left(V\left(P(u_{i},w,u_{s})\right)-\{w\}\right)\right|\ge r$. Now, if $\ter(w)=2$, then we have $$\left|S\cap \left(V\left(P(u_{i},w,u_{s})\right)-\{w\}\right)\right|\ge r=I_{r}(w).$$
Now, we assume $\ter(w)>2$. Let $W$ be the set of terminal vertices of $w$, and let $u_{j}'$ be the vertex adjacent to $w$ in the path $P(u_{j},w)$ for every $u_{j}\in W$. Let $U(w)=\displaystyle\bigcup_{u_{j}\in W}V\left(P(u_{j},w)\right)-\{w\}$ and let $x=\displaystyle\min_{u_{j}\in W}\lbrace\left|S\cap V\left(P(u_{j},w)\right)\right|\rbrace$. Since $S$ is an $r$-metric generator of minimum cardinality (it is an $r$-metric basis of $G$), it is satisfied that $0\le x\le \min\lbrace l(w),\lfloor\frac{r}{2}\rfloor\rbrace$. Let $u_{\alpha}$ be a terminal vertex such that $\left|S\cap \left(V\left(P(u_{\alpha},w)\right)-\{w\}\right)\right|=x$. Since for every terminal vertex $u_{\beta}\in W-\{u_{\alpha}\}$ we have that $|S\cap {\cal D}_G(u_{\beta}',u_{\alpha}')|\ge r$, it follows that $\left|S\cap \left(V\left(P(u_{\beta},w)\right)-\{w\}\right)\right|\ge r-x$. Thus,
\begin{align*}
|S\cap U(w)|&=\left|S\cap \left(V\left(P(u_{\alpha},w)\right)-\{w\}\right)\right|+\\
&+\displaystyle\sum_{\beta=1,\beta\ne \alpha}^{\ter(w)}\left|S\cap \left(V\left(P(u_{\beta},w)\right)-\{w\}\right)\right|\\
&\ge \left(\ter(w)-1\right)(r-x)+x.
\end{align*}
Now, if $x=0$, then $|S\cap U(w)|\ge \left(\ter(w)-1\right)r>I_r(w)$. On the contrary, if $x>0$, then the function $f(x)=\left(\ter(w)-1\right)(r-x)+x$ is decreasing with respect to $x$. So, the minimum value of $f$ is achieved in the highest possible value of $x$. Thus, $|S\cap U(w)|\ge I_r(w)$. Since $\displaystyle\bigcap_{w\in \mathcal{M}(G)}U(w)=\emptyset$, it follows that
$$\displaystyle \dim_{r}(G)\geq \sum_{w\in \mathcal{M}(G)}|S\cap U(w)|\ge \sum_{w\in \mathcal{M}(G)}I_{r}(w).$$
\end{proof}

Now, in order to give some consequences of the bound above we will use some notation defined in Section \ref{SectionBoundK-dimensional}  to introduce the following parameter.

$$\mu(G)=\sum_{v\in{\cal M}(G)}\ter(v).$$
Notice that for $k=1$ Theorem \ref{theoMuk} leads to the bound on the metric dimension of a graph, established by Chartrand \textit{et al.} in \cite{Chartrand2000}. In such a case, $I_1(w)=\ter(w)-1$ for all $w\in \mathcal{M}(G)$ and thus, $$\dim(G)\geq \sum_{w\in \mathcal{M}(G)}\left(\ter(w)-1\right)=\mu(G)-|\mathcal{M}(G)|.$$
Next we give the particular cases of Theorem \ref{theoMuk} for $r=2$ and $r=3$.

\begin{corollary}\label{coroMu2}
$\ $If $G$ is a connected graph, then $$\dim_{2}(G)\geq \mu(G).$$
\end{corollary}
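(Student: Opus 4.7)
The plan is to derive this directly from Theorem \ref{theoMuk} by specializing $r=2$, so the work reduces to a case analysis of the piecewise definition of $I_r(w)$.

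First, I would dispose of the degenerate case $\mathcal{M}(G)=\emptyset$, where $\mu(G)=0$ by convention (empty sum) and there is nothing to prove. So from here on I may assume $|\mathcal{M}(G)|\ge 1$. By Remark \ref{remarkKMetric}, every connected graph of order at least two is $k$-metric dimensional for some $k\ge 2$, which means the hypothesis of Theorem \ref{theoMuk} is satisfied and I am entitled to apply that bound at $r=2$.

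The core calculation is to verify that $I_2(w)=\ter(w)$ for every $w\in\mathcal{M}(G)$. Note that $l(w)\ge 1$ always, and for $r=2$ we have $\lfloor r/2\rfloor=\lceil r/2\rceil=1$. In the first branch of the piecewise definition, $l(w)\le 1$ forces $l(w)=1$, and so
\[
I_2(w)=(\ter(w)-1)(2-1)+1=\ter(w).
\]
In the second branch, $l(w)>1$, and thus
\[
I_2(w)=(\ter(w)-1)\cdot 1+1=\ter(w).
\]
Either way, $I_2(w)=\ter(w)$.

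Finally, summing over $\mathcal{M}(G)$ and plugging into Theorem \ref{theoMuk} yields
\[
\dim_2(G)\ge \sum_{w\in\mathcal{M}(G)}I_2(w)=\sum_{w\in\mathcal{M}(G)}\ter(w)=\mu(G),
\]
which is the desired conclusion. There is no real obstacle here beyond bookkeeping; the corollary is essentially a one-line evaluation of the function $I_r$ at $r=2$, taking advantage of the fact that the two branches coincide precisely when $\lfloor r/2\rfloor=\lceil r/2\rceil$, which is the parity-even case that $r=2$ trivially satisfies.
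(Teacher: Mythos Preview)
Your proof is correct and follows essentially the same approach as the paper: handle the empty case $\mathcal{M}(G)=\emptyset$ first, then apply Theorem \ref{theoMuk} at $r=2$ after verifying $I_2(w)=\ter(w)$. The paper simply asserts $I_2(w)=\ter(w)$ without the case analysis you provide, so your version is slightly more detailed but otherwise identical in strategy.
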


\begin{proof}
$\ $If $\mathcal{M}(G)=\emptyset$, then $\mu(G)=0$ and the result is direct. Suppose that $\mathcal{M}(G)\ne\emptyset$. Since  $I_2(w)=\ter(w)$ for all $w\in \mathcal{M}(G)$, we deduce that  $$\dim_{2}(G)\displaystyle\geq \sum_{w\in \mathcal{M}(G)}\ter(w)=\mu(G).$$
\end{proof}

\begin{corollary}\label{coroMu3}
$\ $If $G$ is $k$-metric dimensional for some $k\ge 3$, then $$\dim_{3}(G)\geq 2\mu(G)-|\mathcal{M}(G)|.$$
\end{corollary}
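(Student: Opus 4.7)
The plan is to apply Theorem \ref{theoMuk} directly with $r=3$ and simply evaluate the piecewise function $I_{3}(w)$ for every $w\in\mathcal{M}(G)$. Since $G$ is $k$-metric dimensional with $k\ge 3$, the hypothesis of Theorem \ref{theoMuk} is met for $r=3$, so the bound $\dim_{3}(G)\ge \sum_{w\in\mathcal{M}(G)}I_{3}(w)$ applies whenever $\mathcal{M}(G)\ne\emptyset$; and if $\mathcal{M}(G)=\emptyset$ then $\mu(G)=0=|\mathcal{M}(G)|$ and the stated bound is trivial.

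The main (only) calculation is to show $I_{3}(w)=2\,\ter(w)-1$ for every $w\in\mathcal{M}(G)$, regardless of which branch of the definition applies. With $r=3$ we have $\lfloor r/2\rfloor=1$ and $\lceil r/2\rceil=2$. Because each $w\in\mathcal{M}(G)$ has at least one terminal vertex, $l(w)\ge 1$. In the first branch, $l(w)\le 1$ forces $l(w)=1$, giving
\[
I_{3}(w)=(\ter(w)-1)(3-1)+1=2\,\ter(w)-1.
\]
In the second branch, $l(w)>1$, we obtain
\[
I_{3}(w)=(\ter(w)-1)\cdot 2+1=2\,\ter(w)-1.
\]
Thus the two cases agree.

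Summing over $w\in\mathcal{M}(G)$ and applying Theorem \ref{theoMuk} yields
\[
\dim_{3}(G)\;\ge\;\sum_{w\in\mathcal{M}(G)}I_{3}(w)\;=\;\sum_{w\in\mathcal{M}(G)}\bigl(2\,\ter(w)-1\bigr)\;=\;2\mu(G)-|\mathcal{M}(G)|,
\]
which is exactly the claimed inequality. There is no genuine obstacle here: the entire content sits inside Theorem \ref{theoMuk}, and the corollary is the observation that for $r=3$ both branches of $I_{r}(w)$ collapse to the same linear expression in $\ter(w)$.
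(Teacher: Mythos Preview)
Your proof is correct and follows exactly the same approach as the paper's own proof: apply Theorem \ref{theoMuk} with $r=3$, observe that $I_{3}(w)=2\ter(w)-1$ for every $w\in\mathcal{M}(G)$, and sum. Your version is in fact slightly more explicit, since you verify the identity $I_{3}(w)=2\ter(w)-1$ in both branches of the piecewise definition, whereas the paper simply asserts it.
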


\begin{proof}
$\ $If $\mathcal{M}(G)=\emptyset$, then the result is direct. Suppose that $\mathcal{M}(G)\ne\emptyset$. Since $I_3(w)=2 \ter(w)-1$ for all $w\in \mathcal{M}(G)$, we obtain that  $$\dim_{3}(G)\geq \sum_{w\in \mathcal{M}(G)}\left(2 \ter(w)-1\right)=2\mu(G)-|\mathcal{M}(G)|.$$
\end{proof}

In next section we give some results on trees which show that the bounds proved in Theorem \ref{theoMuk} and Corollaries \ref{coroMu2} and \ref{coroMu3} are tight. Specifically those results are Theorem \ref{theoTreeDimR} and Corollaries \ref{corotree2} and \ref{corotree3}, respectively.

\section{The particular case of trees}\label{sect-dim-trees}

To study the $k$-metric dimension of a tree it is of course necessary to know first the value $k$ for which a given tree is $k$-metric dimensional. That is what we do next. In this sense, from  now on we need the terminology and notation already described in Section \ref{SectionBoundK-dimensional} and also the following one. Given an exterior major vertex $v$ in a tree $T$ and the set of its terminal vertices $v_1,\ldots,v_{\alpha}$, the subgraph induced by the set $\displaystyle\bigcup_{i=1}^{\alpha} V(P(v,v_i))$ is called a \emph{branch} of $T$ at $v$ (a $v$-branch for short).

\begin{theorem}\label{theoTreeDimK}
$\ $If $T$ is a $k$-metric dimensional tree different from a path, then $k=\varsigma(T)$.
\end{theorem}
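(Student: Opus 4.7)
The direction $k \le \varsigma(T)$ is immediate from Theorem~\ref{coroKcota}: $T$ not being a path forces $\mathcal{M}(T) \ne \emptyset$, so the hypothesis of that theorem is met. For the reverse, I would invoke Theorem~\ref{theokmetric} and verify $|\mathcal{D}_T(x,y)| \ge \varsigma(T)$ for every pair of distinct vertices $x, y \in V(T)$. Splitting on the parity of $d_T(x,y)$: when it is odd, uniqueness of paths in a tree prevents any vertex from being equidistant from $x$ and $y$, so $\mathcal{D}_T(x,y) = V(T)$ and $|\mathcal{D}_T(x,y)| = n \ge \varsigma(T) + 1$.

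\textbf{Main case.} When $d_T(x,y) = 2\ell$ is even, denote the $x$-$y$ path by $v_0 = x, v_1, \ldots, v_{2\ell} = y$ and put $m = v_\ell$. Deleting the two path edges incident to $m$ decomposes $V(T)$ into components $T_x^{*}, T_m^{*}, T_y^{*}$ (containing $x, m, y$ respectively), and one checks that $\mathcal{D}_T(x,y) = T_x^{*} \cup T_y^{*}$. My goal is then to exhibit $w \in \mathcal{M}(T)$ and two terminal vertices $u_1, u_2$ of $w$ with $V(P(u_1, w, u_2)) \setminus \{w\} \subseteq T_x^{*} \cup T_y^{*}$, which yields $|T_x^{*}| + |T_y^{*}| \ge l(u_1, w) + l(u_2, w) \ge \varsigma(w) \ge \varsigma(T)$. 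I would handle three configurations. First, if $m$ is not major then $\deg_T(m) = 2$, so $T_m^{*} = \{m\}$ and $|\mathcal{D}_T(x,y)| = n - 1 \ge \varsigma(T)$ directly. Second, if $m$ is major and $\mathcal{M}(T) \cap T_x^{*} \ne \emptyset$ (with a symmetric sub-case for $T_y^{*}$), then because $m$ is major no terminal path of $T$ can have $m$ as an interior vertex, so the two shortest terminal paths of any $w \in \mathcal{M}(T) \cap T_x^{*}$ stay inside $T_x^{*}$ and provide the desired inclusion. Third, if $m$ is major and $(T_x^{*} \cup T_y^{*}) \cap \mathcal{M}(T) = \emptyset$, I would show that $T_x^{*}$ and $T_y^{*}$ are both paths in $T$; their leaf endpoints $L_x, L_y$ are then leaves of $T$ whose $T$-paths to $m$ use only degree-$2$ intermediates, so $L_x, L_y$ are terminal vertices of $m$ with $l(L_x, m) = |T_x^{*}|$ and $l(L_y, m) = |T_y^{*}|$, giving $\varsigma(T) \le \varsigma(m) \le |T_x^{*}| + |T_y^{*}|$.

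\textbf{Main obstacle.} The delicate point is the claim in the third configuration that $T_x^{*}$ contains no major of $T$. I would establish it by a degree count on the contracted tree $S_x$ whose vertex set $M_x$ consists of the majors of $T$ lying in $T_x^{*}$ and whose edges connect pairs joined in $T_x^{*}$ by a path of non-majors. Every $z \in M_x$ satisfies $\deg_T(z) \ge 3$ and, by the case hypothesis combined with the observation that terminal paths of $z$ cannot cross the major $m$, $\ter_T(z) \le 1$. These constraints force $\deg_{S_x}(z) \ge 2$ except possibly at one distinguished vertex (the element of $M_x$ closest to $v_{\ell-1}$, whose $v_{\ell-1}$-direction branch can have $m$ itself as first encountered major), for which $\deg_{S_x}(z) \ge 1$. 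Summing degrees in the tree $S_x$ then contradicts $\sum_{z} \deg_{S_x}(z) = 2(|M_x| - 1)$ unless $M_x = \emptyset$. Accounting correctly for the single exceptional vertex at the boundary is the delicate bookkeeping step.
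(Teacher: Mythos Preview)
Your argument is correct, and it takes a genuinely different route from the paper's proof.

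The paper proves the lower bound $\vert\mathcal{D}_T(x,y)\vert \ge \varsigma(T)$ by decomposing $V(T)$ into the branches $V_w$ for $w\in\mathcal{M}(T)$ and the remainder $V'$, then doing a case analysis on which of these pieces contain $x$ and $y$. In each case it uses only the crude count $\vert V_w\vert \ge \varsigma(T)+1$ together with the observation that at most one vertex of the relevant branch fails to distinguish the pair. Your approach instead identifies the midpoint $m$ of the $x$--$y$ path (in the even case) and gives the \emph{exact} description $\mathcal{D}_T(x,y)=T_x^{*}\cup T_y^{*}$, then argues locally around $m$. The trade-off is clear: the paper's branch decomposition is lighter (no contracted-tree degree count is needed), while your midpoint decomposition yields a sharper structural picture of $\mathcal{D}_T(x,y)$ that one could reuse elsewhere. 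Your handling of the delicate third configuration is sound: there is indeed at most one vertex $z_0\in M_x$ whose toward-$m$ direction reaches $m$ first (any two such vertices would force the degree-$2$ chain from $v_{\ell-1}$ to bifurcate, which is impossible), and the resulting inequality $\sum_{z\in M_x}\deg_{S_x}(z)\ge 2(\vert M_x\vert-1)+1$ contradicts the tree identity $\sum\deg_{S_x}=2(\vert M_x\vert-1)$ when $\vert M_x\vert\ge 2$; the case $\vert M_x\vert=1$ is ruled out because $z_0$ would then have $\ter_T(z_0)\ge \deg_T(z_0)-1\ge 2$, contrary to $z_0\notin\mathcal{M}(T)$. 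One small remark: in that third configuration your phrase ``combined with the observation that terminal paths of $z$ cannot cross the major $m$'' is not actually needed to get $\ter_T(z)\le 1$ --- that follows immediately from $z\notin\mathcal{M}(T)$.
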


\begin{proof}
$\ $Since $T$ is not a path,  $\mathcal{M}(T)\ne \emptyset$. 
Let  $w\in \mathcal{M}(T)$ and let $u_{1},u_{2}$ be two terminal vertices of $w$ such that $\varsigma(T)=\varsigma(w)=\varsigma(u_{1},u_{2})$. Notice that, for instance, the two neighbours of $w$ belonging to the paths $P(w,u_{1})$ and $P(w,u_{2})$, say $u'_{1}$ and $u'_{2}$ satisfy $|{\cal D}_T(u'_{1},u'_{2})|=\varsigma(T)$.

It only remains to prove that for every $x,y\in V(T)$ it holds that $|{\cal D}_T(x,y)|\ge\varsigma(T)$.
Let $w\in \mathcal{M}(T)$ and let  $T_{w}=(V_{w},E_{w})$ be the $w$-branch. Also we consider the set of vertices $V'=V(T)-\bigcup_{w\in \mathcal{M}(T) }V_{w}$. 
 Note that $|V_w|\ge \varsigma(T)+1$ for every $w \in \mathcal{M}(T)$. With this fact in mind, we consider three cases.

{\bf Case 1:} $x\in V_w$ and $y\in V_{w'}$ for some $w,w'\in \mathcal{M}(T)$, $w\ne w'$. In this case $x,y$ are distinguished  by $w$ or by $w'$. Now, if $w$ distinguishes the pair $x,y$, then at most one element of $V_w$ does not distinguish $x,y$ (see Figure \ref{counterexample}). So, $x$ and $y$ are distinguished by at least $|V_w|-1$ vertices of $T$ or by at least $|V_{w'}|-1$ vertices of $T$.\\

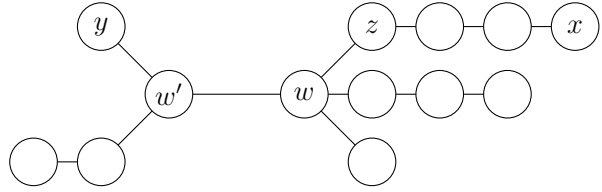
\begin{figure}[!ht]
\centering
\begin{tikzpicture}[scale=.9, transform shape, inner sep = 1pt, outer sep = 0pt, minimum size = 20pt]
\node [draw, shape=circle, fill=white] (wj) at (0,0) {$w'$};
\node [draw, shape=circle, fill=white] (y) at (-1,1) {$y$};
\node [draw, shape=circle, fill=white] (u1) at (-1,-1) {};
\node [draw, shape=circle, fill=white] (u2) at (-2,-1) {};
\draw (wj) -- (y);
\draw (wj) -- (u1);
\draw (u1) -- (u2);
\node [draw, shape=circle, fill=white] (wi) at (2,0) {$w$};
\node [draw, shape=circle, fill=white] (v1) at (3,1) {$z$};
\node [draw, shape=circle, fill=white] (v2) at (4,1) {};
\node [draw, shape=circle, fill=white] (v3) at (5,1) {};
\node [draw, shape=circle, fill=white] (x) at (6,1) {$x$};
\node [draw, shape=circle, fill=white] (v4) at (3,-1) {};
\node [draw, shape=circle, fill=white] (v5) at (3,0) {};
\node [draw, shape=circle, fill=white] (v6) at (4,0) {};
\node [draw, shape=circle, fill=white] (v7) at (5,0) {};
\draw (wi) -- (wj);
\draw (wi) -- (v1);
\draw (wi) -- (v4);
\draw (wi) -- (v5);
\draw (v1) -- (v2);
\draw (v2) -- (v3);
\draw (v3) -- (x);
\draw (v5) -- (v6);
\draw (v6) -- (v7);
\end{tikzpicture}
\caption{In this example, $w$ distinguishes the pair $x,y$, and $z$ is the only vertex in $V_{w}$ that does not distinguish $x,y$.} \label{counterexample}
\end{figure}
{\bf Case 2:} $x\in V'$ or $y\in V'$. Thus, $V'\ne \emptyset$ and, as a consequence, $\vert\mathcal{M}(T)\vert \ge 2$. Hence, we have one of the following situations.
\begin{itemize}
\item $\ $There exist two vertices $w,w'\in \mathcal{M}(T)$, $w\ne w'$, such that the shortest path from $x$ to $w$ and the shortest path from $y$ to $w'$ have empty intersection, or
\item $\ $for every vertex $w''\in \mathcal{M}(T)$, it follows that either $y$ belongs to the shortest path from $x$ to $w''$  or $x$ belongs to the shortest  path from $y$ to $w''$.
\end{itemize}
In the first case, $x,y$ are distinguished by vertices in $V_w$ or by vertices in  $V_{w'}$ and in the second one, $x,y$ are distinguished by vertices in $V_{w''}$.

{\bf Case 3:} $x,y\in V_{w}$ for some $w\in \mathcal{M}(T)$. If $x,y\in V(P(u_{l},w))$ for some $l\in \{1,\ldots,\ter(w)\}$, then there exists at most one vertex of $V(P(u_{l},w))$ which does not distinguish $x,y$. Since $\ter(w)\ge 2$, the vertex  $w$ has a terminal vertex $u_{q}$ with $q\ne l$. So, $x,y$ are distinguished by at least $|V(P(u_{l},w,u_{q}))|-1$ vertices, and since $|V(P(u_{l},w,u_{q}))|\ge \varsigma(T)+1$, we are done. If $x\in V(P(u_{l},w)$ and $y\in V(P(u_{q},w)$ for some $l,q\in \{1,\ldots,\ter(w)\}$, $l\ne q$, then there exists at most one vertex of $V(P(u_{l},w,u_{q}))$ which does not distinguish $x,y$. Since $|V(P(u_{l},w,u_{q}))|\ge \varsigma(T)+1$, the result follows.

Therefore,  $\varsigma(T)=\displaystyle\min_{x,y\in V(T)}\vert {\cal D}_T(x,y)\vert$ and by Theorem \ref{theokmetric}  the result follows.
\end{proof}

Since any path is a particular case of a tree and its behavior with respect to the $k$-metric dimension is relative different, here we analyze them in first instance. In Proposition \ref{theoPath2} we noticed that the $2$-metric dimension of a path $P_{n}(n\geqslant 2)$ is two. Here we give a formula for the $k$-metric dimension of any path graph for $k\ge 3$.

\begin{proposition}\label{propPath}
$\ $Let $k\geq 3$ be an integer. For any path graph $P_{n}$ of order $n\geq k+1$, $$\dim_{k}(P_{n})=k+1.$$
\end{proposition}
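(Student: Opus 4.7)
The plan is to compute the distinctive sets ${\cal D}_{P_n}(v_a,v_b)$ exactly, and then derive matching upper and lower bounds of $k+1$ from this description. Label the path so that $V(P_n)=\{v_1,\ldots,v_n\}$ with $v_iv_{i+1}\in E$, so that $d(v_i,v_j)=|i-j|$. A direct computation shows that, for $1\le a<b\le n$, a vertex $v_c$ fails to distinguish $v_a,v_b$ if and only if $a\le c\le b$ and $c-a=b-c$, i.e., $c=(a+b)/2$. Consequently,
\[
|{\cal D}_{P_n}(v_a,v_b)|=\begin{cases} n, & \text{if } a+b \text{ is odd,}\\ n-1, & \text{if } a+b \text{ is even,}\end{cases}
\]
and in the even case the unique non-distinguishing vertex is the midpoint $v_{(a+b)/2}$.

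For the upper bound, I would take any set $S\subseteq V(P_n)$ of size $k+1$, which is possible since $n\ge k+1$. For every pair $v_a,v_b$, the complement $V(P_n)\setminus{\cal D}_{P_n}(v_a,v_b)$ has at most one element, hence $|S\cap{\cal D}_{P_n}(v_a,v_b)|\ge |S|-1=k$. Thus $S$ is a $k$-metric generator and $\dim_k(P_n)\le k+1$.

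For the lower bound, I would argue by contradiction. Assume some $k$-metric generator $S$ satisfies $|S|\le k$; since any $k$-metric generator has cardinality at least $k$, we get $|S|=k$. Then for every pair $v_a,v_b$, the requirement $|S\cap{\cal D}_{P_n}(v_a,v_b)|\ge k=|S|$ forces $S\subseteq{\cal D}_{P_n}(v_a,v_b)$. The key observation is that every interior vertex $v_m$ with $2\le m\le n-1$ is the midpoint of the pair $(v_{m-1},v_{m+1})$ (valid since $n\ge k+1\ge 4$), so $v_m\notin S$. Therefore $S\subseteq\{v_1,v_n\}$, forcing $|S|\le 2$, which contradicts $k\ge 3$. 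This yields $\dim_k(P_n)\ge k+1$, and combining both bounds gives the claimed equality.

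The only subtlety I anticipate is noticing that midpoints of \emph{consecutive} odd-spaced pairs already exhaust all interior vertices; this is what makes the lower bound tight and also what separates the case $k\ge 3$ (where $\{v_1,v_n\}$ alone is insufficient) from the case $k=2$ already handled in Proposition \ref{theoPath2}.
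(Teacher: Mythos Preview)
Your proof is correct and follows essentially the same approach as the paper. Both arguments rest on the same two facts: (i) for any pair of vertices of $P_n$ at most one vertex (the midpoint) fails to distinguish them, which gives the upper bound via any $(k+1)$-subset; and (ii) every interior vertex is a midpoint of some pair, so a $k$-metric generator of size exactly $k$ would have to avoid all interior vertices, contradicting $k\ge 3$. The paper phrases the lower bound directly (any $k$-metric basis has size $\ge 3$, hence contains an interior vertex $w$, and the pair not distinguished by $w$ forces $k$ further elements), while you phrase it by contradiction, but the content is identical.
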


\begin{proof}
$\ $Let $v_{1}$ and $v_{n}$ be the leaves of $P_{n}$ and let $S$ be a $k$-metric basis of $P_n$.  Since $|S|\ge k\ge 3$, there exists at least one vertex $w\in S\cap (V(P_{n})-\{v_1,v_n\})$. For any vertex $w\in V(P_{n})-\{v_1,v_n\}$  there exist at least two vertices $u,v\in V(P_{n})$ such that $w$ does not distinguish $u$ and $v$. Hence, $|S|=\dim_{k}(P_{n})\geq k+1$.

Now, notice that for any pair of different vertices $u,v\in V(P_{n})$ there exists at most one vertex $w\in V(P_{n})-\{v_{1},v_{n}\}$ such that $w$ does not distinguish $u$ and $v$. Thus, we have that for every $S\subseteq V(P_{n})$ such that $|S|=k+1$ and every pair of different vertices $x,y\in V(P_{n})$, there exists at least $k$ vertices of $S$ such that they distinguish $x,y$. So $S$ is a $k$-metric generator for $P_n$. Therefore, $\dim_{k}(P_{n})\le |S|=k+1$ and, consequently, the result follows.
\end{proof}

Once studied the path graphs, we are now able to give a formula for the $r$-metric dimension of any $k$-metric dimensional tree different from a path which, among other usefulness, shows that Theorem \ref{theoMuk} is tight.

\begin{theorem}\label{theoTreeDimR}
$\ $If $T$ is a tree which is not a path, then for any $r\in \{1,\ldots, \varsigma(T)\}$, $$\dim_{r}(T)=\sum_{w\in \mathcal{M}(T)}I_{r}(w).$$
\end{theorem}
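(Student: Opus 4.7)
The lower bound $\dim_r(T) \geq \sum_{w\in \mathcal{M}(T)} I_r(w)$ is already at hand: by Theorem~\ref{theoTreeDimK}, $T$ is $\varsigma(T)$-metric dimensional, so for every $r \leq \varsigma(T)$ we are in the admissible range of Theorem~\ref{theoMuk}. The work is therefore to match this from above by constructing an explicit $r$-metric generator $S$ of size exactly $\sum_{w\in \mathcal{M}(T)} I_r(w)$.

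The plan is the following construction. For each $w \in \mathcal{M}(T)$, fix a terminal vertex $u_{j^\star}$ of $w$ with $l(u_{j^\star},w) = l(w)$, and then: if $l(w)\leq \lfloor r/2\rfloor$, include into $S$ all $l(w)$ vertices of $P(u_{j^\star},w)-\{w\}$, and, for every other terminal $u_j$ of $w$, the $r - l(w)$ vertices of $P(u_j,w)-\{w\}$ nearest to $u_j$; if $l(w) > \lfloor r/2\rfloor$, include the $\lfloor r/2 \rfloor$ vertices of $P(u_{j^\star},w)-\{w\}$ nearest $u_{j^\star}$, and for every other terminal $u_j$ the $\lceil r/2 \rceil$ vertices of $P(u_j,w)-\{w\}$ nearest $u_j$. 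The constraint $r\le\varsigma(T)\le\varsigma(w) = \min_{j\ne l}(l(u_j,w)+l(u_l,w))$ guarantees that every required count of vertices is available on every terminal path. Since the sets $U(w)=V(T_w)-\{w\}$ are pairwise disjoint across $w\in\mathcal M(T)$, we have $|S| = \sum_w I_r(w)$ by inspection.

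The verification that $S$ is an $r$-metric generator uses the standard tree identity that $z$ fails to distinguish $x,y$ if and only if $d_T(x,y)$ is even and $z$ lies in the connected component of $T-\{m\}$ avoiding $x$ and $y$, where $m$ is the midpoint of the $x$--$y$ path. I would proceed case-by-case on a pair $x,y$: (i) both on the same terminal path $P(u_j,w)$; (ii) on two different terminal paths of the same $w$; (iii) in two different branches $T_w, T_{w'}$, or one in a branch and the other in the ``core'' $V(T)-\bigcup_w V(T_w)$; (iv) both in the core. In every case the selected vertices on the terminal paths whose leaves lie on the $x$-side or $y$-side of $m$ are automatic distinguishers (a leaf always distinguishes, and no midpoint of an $x$--$y$ path can lie strictly between a leaf and the nearest exterior major vertex unless $x$ or $y$ is on that tail), and a direct count against the split $l(w)/(r-l(w))$ or $\lfloor r/2\rfloor / \lceil r/2\rceil$ delivers at least $r$ such distinguishers.

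The main obstacle is case (ii) when $d_T(x,y)$ is even with midpoint exactly at $w$, since then a whole sub-branch of $T_w$ together with the entire core of $T$ outside $T_w$ becomes indistinguishable, and only the selected vertices on the two terminal paths of $x$ and $y$ survive. This is precisely the configuration for which $I_r(w)$ has been tuned: the split into $\lfloor r/2\rfloor$ and $\lceil r/2\rceil$ (or $l(w)$ and $r-l(w)$) shares is chosen so that any two such terminal paths contribute at least $r$ vertices to $S\cap\mathcal{D}_T(x,y)$. Checking this balance carefully---together with the sub-case where $w$ has only two terminals and one of them has $l(u_{j^\star},w) < l(u_j,w)$---is the single technical step that closes the proof; the remaining cases reduce to it either by symmetry or by observing that outside-branch vertices are distinguished by any terminal of the relevant $w$, which is always in $S$.
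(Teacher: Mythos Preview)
Your proposal is correct and follows essentially the same approach as the paper: the same lower bound via Theorem~\ref{theoMuk}, the identical construction of $S$ (the paper also fixes a shortest terminal path $u_1$ with $l(u_1,w)=l(w)$ and uses exactly the $l(w)/(r-l(w))$ versus $\lfloor r/2\rfloor/\lceil r/2\rceil$ split, selecting vertices nearest the leaves), and the same case analysis on the position of $x,y$ relative to the branches $T_w$ and the core $V'$. Your explicit midpoint formulation is a clean way to organize what the paper argues implicitly in its Subcases~1.1--1.2 and Cases~2--3; just be careful that the non-distinguishing set is $\{m\}$ together with every component of $T-\{m\}$ missing both $x$ and $y$ (so it is exactly $\{m\}$ when $\deg_T(m)=2$), which is the precise form you will need when $m$ lies on a terminal path.
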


\begin{proof}
$\ $Since $T$ is not a path, $T$ contains at least one vertex belonging to $\mathcal{M}(T)$. Let $w\in \mathcal{M}(T)$ and let $T_{w}=(V_{w},E_{w})$ be  the $w$-branch. Also we consider the set  $V'=V(T)-\bigcup_{w\in \mathcal{M}(T)} V_{w}$.  For every $w\in \mathcal{M}(T)$, we suppose $u_{1}$ is a terminal vertex of $w$ such that $l(u_{1},w)=l(w)$. Let $U(w)=\{u_1,u_{2},\ldots,u_{s}\}$ be the set of terminal vertices of $w$. Now, for every $u_{j}\in U(w)$, let the path $P(u_{j},w)=u_{j}u^1_{j}u^2_{j}\ldots u^{l(u_{j},w)-1}_{j}w$ and we consider the set $S(u_{j},w)\subset V\left(P(u_{j},w)\right)-\{w\}$ given by:
\[
S(u_{1},w)=\left\{
\begin{array}{ll}

\left\{u_{1},u^1_{1},\ldots,u^{l(w)-1}_{1}\right\}, & \mbox{if } l(w)\le\lfloor\frac{r}{2}\rfloor 
\\
\\
\left\{u_{1},u^1_{1},\ldots,u^{\lfloor\frac{r}{2}\rfloor-1}_{1}\right\}, & \mbox{if } l(w)>\lfloor\frac{r}{2}\rfloor . 
\end{array}
\right.
\]
and for $j\ne 1$,
\[
S(u_{j},w)=\left\{
\begin{array}{ll}
\left\{u_{j},u^1_{j},\ldots,u^{r-l(w)-1}_{j}\right\}, & \mbox{if } l(w)\le\lfloor\frac{r}{2}\rfloor ,\\
\\
\left\{u_{j},u^1_{j},\ldots,u^{\lceil\frac{r}{2}\rceil-1}_{j}\right\}, & \mbox{if } l(w)>\lfloor\frac{r}{2}\rfloor .
\end{array}
\right.
\]
According to this we have,
\[
\left|S(u_{j},w)\right|=\left\{
\begin{array}{ll}
\vspace{0.2cm}
l(w), & \mbox{if } l(w)\le\lfloor\frac{r}{2}\rfloor \mbox{ and } u_{j}=u_{1},\\
\vspace{0.2cm}
r-l(w), & \mbox{if } l(w)\le\lfloor\frac{r}{2}\rfloor \mbox{ and } u_{j}\ne u_{1},\\
\vspace{0.2cm}
\lfloor\frac{r}{2}\rfloor, & \mbox{if } l(w)>\lfloor\frac{r}{2}\rfloor \mbox{ and } u_{j}=u_{1},\\
\vspace{0.2cm}
\lceil\frac{r}{2}\rceil, & \mbox{if } l(w)>\lfloor\frac{r}{2}\rfloor \mbox{ and } u_{j}\ne u_{1}.
\end{array}
\right.
\]
Let $S(w)=\displaystyle\bigcup_{u_{j}\in U(w)}S(u_{j},w)$ and $S=\displaystyle\bigcup_{w\in \mathcal{M}(T)}S(w)$. Since for every $w\in \mathcal{M}(T)$ it follows that $\displaystyle\bigcap_{u_{j}\in U(w)}S(u_{j},w)=\emptyset$ and $\displaystyle\bigcap_{w\in \mathcal{M}(T)}S(w)=\emptyset$, we obtain that $|S|=\displaystyle\sum_{w\in \mathcal{M}(T)}I_{r}(w)$.

Also notice that for every $w\in \mathcal{M}(T)$, such that $\ter(w)=2$ we have $|S(w)|= r$ and, if $\ter(w)>2$, then we have $|S(w)|\ge r+1$. We claim that $S$ is an $r$-metric generator for $T$. Let $u,v$ be two distinct vertices of $T$. We consider the following cases.

{\bf Case 1:} $u,v\in V_{w}$ for some $w\in \mathcal{M}(T)$. We have the following subcases.

{\bf Subcase 1.1:} $u,v\in V(P(u_{j},w))$ for some $j\in \{1,\ldots,\ter(w)\}$. Hence there exists at most one vertex of $S(w)\cap V(P(u_{j},w))$ which does not distinguish $u,v$. If $\ter(w)=2$, then there exists at least one more exterior major vertex $w'\in \mathcal{M}(T)-\{w\}$. So, the elements of $S(w')$  distinguish $u,v$. Since $|S(w')|\ge r$, we deduce that at least $r$ elements of $S$ distinguish $u,v$. On the other hand, if $\ter(w)>2$, then since $|S(w)|\ge r+1$, we obtain that at least $r$ elements of  $S(w)$ distinguish $u,v$.

{\bf Subcase 1.2:} $u\in V(P(u_{j},w))$ and $v\in V(P(u_{l},w))$ for some $j,l\in \{1,\ldots,\ter(w)\}$, $j\ne l$. According to the construction of the set $S(w)$, there exists at most one vertex of ($S(w)\cap (V(P(u_{j},w,u_{l}))$) which does not distinguish $u,v$.

Now,  if $\ter(w)=2$, then there exists  $w'\in \mathcal{M}(T)-\{w\}$. If $d(u,w)=d(v,w)$, then  the $r$ elements of $S(w)$ distinguish $u,v$ and, if $d(u,w)\ne d(v,w)$, then the elements of $S(w')$ distinguish $u, v$.

On the other hand, if $\ter(w)>2$, then since $|S(w)|\ge r+1$, we deduce that at least $r$ elements of  $S(w)$ distinguish $u,v$.

{\bf Case 2:} $u\in V_{w}, v\in V_{w'}$, for some $w,w'\in \mathcal{M}(T)$ with $w\ne w'$. In this case, either the vertices in $S(w)$ or the vertices in $S(w')$ distinguish $u,v$. Since  $|S(w)|\ge r$ and $|S(w')|\ge r$ we have that $u,v$ are distinguished by at least $r$ elements of $S$.

{\bf Case 3:} $u\in V'$ or $v\in V'$. Without loss of generality we assume $u\in V'$. Since $V'\ne \emptyset$, we have that there exist at least two different vertices in $\mathcal{M}(T)$. Hence, we have either one of the following situations.
\begin{itemize}
\item $\ $There exist two vertices $w,w'\in \mathcal{M}(T)$, $w\ne w'$, such that the shortest path from $u$ to $w$ and the shortest path from $v$ to $w'$ have empty intersection, or
\item $\ $for every vertex $w''\in \mathcal{M}(T)$, it follows that either $v$ belongs to every shortest  path from $u$ to $w''$ or $u$ belongs to every shortest  path from $v$ to $w''$.
\end{itemize}
Notice that in both situations, since $|S(w)|\ge r$,  for every $w\in\mathcal{M}(T)$), we have that $u,v$ are distinguished by  at least $r$ elements of $S$. In the first case, $u$ and $v$ are distinguished by the elements of $S(w)$ or by the elements of $S(w')$  and, in the second one, $u$ and $v$ are distinguished by  the elements of  $S(w'')$.

Therefore, $S$ is an $r$-metric generator for $T$ and, by Theorem \ref{theoMuk}, the proof is complete.
\end{proof}

In the case $r=1$, the formula of Theorem \ref{theoTreeDimR} leads to 
$$\dim(T)=\mu(T)-|\mathcal{M}(T)|,$$
which is a result obtained in \cite{Chartrand2000}. Other interesting particular cases are the following ones for $r=2$ and $r=3$, respectively. That is, by Theorem \ref{theoTreeDimR} we have the next results.

\begin{corollary}\label{corotree2}
$\ $If $T$ is a tree different from a path, then $$\dim_{2}(T)=\mu(T).$$
\end{corollary}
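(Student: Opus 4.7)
The plan is to obtain this corollary as a direct specialization of Theorem \ref{theoTreeDimR} at $r = 2$, so the only work is (a) verifying that the hypothesis $r \le \varsigma(T)$ is met and (b) evaluating the piecewise function $I_2(w)$ in closed form.

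First I would check that $\varsigma(T) \ge 2$, which guarantees that $r = 2$ is a legal choice in Theorem \ref{theoTreeDimR}. Since $T$ is not a path, $\mathcal{M}(T)\neq\emptyset$, and any $w\in\mathcal{M}(T)$ has at least two terminal vertices $u_1,u_2$. The length of the concatenated path $P(u_1,w,u_2)$ equals $l(u_1,w)+l(u_2,w)\ge 1+1 = 2$, so $\varsigma(u_1,u_2)\ge 2$, and therefore $\varsigma(T)\ge 2$.

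Next, I would unwind the definition of $I_r(w)$ at $r=2$, using $\lfloor 2/2\rfloor = \lceil 2/2\rceil = 1$. For any $w\in\mathcal{M}(T)$ we have $l(w)\ge 1$, so either $l(w)=1\le\lfloor r/2\rfloor$, in which case
\[
I_2(w) = (\ter(w)-1)(2-l(w)) + l(w) = (\ter(w)-1)\cdot 1 + 1 = \ter(w),
\]
or $l(w)>1$, in which case
\[
I_2(w) = (\ter(w)-1)\lceil 2/2\rceil + \lfloor 2/2\rfloor = (\ter(w)-1)\cdot 1 + 1 = \ter(w).
\]
In both cases $I_2(w)=\ter(w)$.

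Finally, Theorem \ref{theoTreeDimR} applied with $r=2$ gives
\[
\dim_2(T) \;=\; \sum_{w\in\mathcal{M}(T)} I_2(w) \;=\; \sum_{w\in\mathcal{M}(T)} \ter(w) \;=\; \mu(T),
\]
which is the claim. There is really no hard step here; the only thing one must be careful about is the case analysis in the definition of $I_r$ and the easy verification that $r=2$ is within the admissible range $\{1,\dots,\varsigma(T)\}$.
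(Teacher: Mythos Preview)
Your proposal is correct and matches the paper's approach exactly: the corollary is stated as a direct consequence of Theorem~\ref{theoTreeDimR} at $r=2$, and the computation $I_2(w)=\ter(w)$ is precisely the one carried out in the proof of Corollary~\ref{coroMu2}. Your added verification that $\varsigma(T)\ge 2$ (so that $r=2$ lies in the admissible range) is a detail the paper leaves implicit, but it is correct and worth including.
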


\begin{corollary}\label{corotree3}
$\ $If $T$ is a tree different from a path with $\varsigma(T)\ge 3$, then $$\dim_{3}(T)=2\mu(T)-|\mathcal{M}(T)|.$$
\end{corollary}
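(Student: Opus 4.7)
The plan is to derive this corollary directly from Theorem \ref{theoTreeDimR} by specializing to $r=3$. Since the hypothesis $\varsigma(T)\ge 3$ guarantees that $r=3$ lies in the valid range $\{1,\ldots,\varsigma(T)\}$, and since $T$ is not a path, Theorem \ref{theoTreeDimR} applies and gives
$$\dim_3(T)=\sum_{w\in \mathcal{M}(T)}I_3(w).$$
So the entire task reduces to simplifying $I_3(w)$ for every $w\in \mathcal{M}(T)$.

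With $r=3$ one has $\lfloor r/2\rfloor=1$ and $\lceil r/2\rceil=2$. Recall also that $l(w)\ge 1$ for every exterior major vertex $w$ (since an adjacent terminal path has length at least one). I would split into the two cases appearing in the definition of $I_r(w)$. If $l(w)\le\lfloor r/2\rfloor$, i.e.\ $l(w)=1$, then
$$I_3(w)=\bigl(\ter(w)-1\bigr)(3-1)+1=2\,\ter(w)-1.$$
If instead $l(w)>\lfloor r/2\rfloor$, i.e.\ $l(w)\ge 2$, then
$$I_3(w)=\bigl(\ter(w)-1\bigr)\lceil 3/2\rceil+\lfloor 3/2\rfloor=2\bigl(\ter(w)-1\bigr)+1=2\,\ter(w)-1.$$
Thus in either situation $I_3(w)=2\,\ter(w)-1$.

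Substituting back into the formula from Theorem \ref{theoTreeDimR}, the sum telescopes immediately:
$$\dim_3(T)=\sum_{w\in\mathcal{M}(T)}\bigl(2\,\ter(w)-1\bigr)=2\sum_{w\in\mathcal{M}(T)}\ter(w)\;-\;|\mathcal{M}(T)|=2\mu(T)-|\mathcal{M}(T)|,$$
where the last equality uses the definition $\mu(T)=\sum_{v\in \mathcal{M}(T)}\ter(v)$.

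There is really no obstacle here: the content of the corollary is concentrated in Theorem \ref{theoTreeDimR}, and the only verification needed is the pleasant coincidence that, for $r=3$, both branches of the piecewise definition of $I_r(w)$ collapse to the same expression $2\,\ter(w)-1$, independently of $l(w)$. The hypothesis $\varsigma(T)\ge 3$ is the minimal assumption ensuring that $T$ is at least $3$-metric dimensional, so that speaking of $\dim_3(T)$ makes sense and Theorem \ref{theoTreeDimR} can be invoked with $r=3$.
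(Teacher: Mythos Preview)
Your proof is correct and follows exactly the approach of the paper: the corollary is obtained from Theorem~\ref{theoTreeDimR} by specializing to $r=3$ and verifying that $I_3(w)=2\ter(w)-1$ in both branches of the piecewise definition, which is precisely the computation carried out in the proof of Corollary~\ref{coroMu3}. Your write-up is in fact more detailed than the paper's own treatment, which simply states the corollary as an immediate consequence of Theorem~\ref{theoTreeDimR}.
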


As mentioned before, the two corollaries above show that the bounds given in Corollaries \ref{coroMu2} and \ref{coroMu3} are achieved. We finish our exposition with a formula for the $k$-metric dimension of a $k$-metric dimensional tree with some specific structure, also showing that the inequality $\dim_{k}(T)\ge \vert{\cal D}_{k}(T)\vert$, given in Remark \ref{remTauk}, can be reached.

\begin{proposition}\label{propTauk}
$\ $Let $T$ be a tree different from a path and let $k\ge 2$ be an integer. If $\ter(w)=2$ and $\varsigma(w)=k$ for every  $w\in \mathcal{M}(T)$,   then $\dim_{k}(T)=\vert{\cal D}_{k}(T)\vert$.
\end{proposition}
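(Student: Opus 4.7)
My plan is to show $\dim_k(T)=|\mathcal{D}_k(T)|$ by matching the upper bound from Remark \ref{remTauk} with a lower bound on $|\mathcal{D}_k(T)|$ that exhibits $k\,|\mathcal{M}(T)|$ vertices which must lie in $\mathcal{D}_k(T)$. Since $T$ is not a path, $\mathcal{M}(T)\neq\emptyset$, and the hypothesis $\varsigma(w)=k$ for every $w\in\mathcal{M}(T)$ gives $\varsigma(T)=k$, so Theorem \ref{theoTreeDimK} confirms that $T$ is $k$-metric dimensional. This permits me to invoke Theorem \ref{theoTreeDimR} at $r=k$ to obtain $\dim_k(T)=\sum_{w\in\mathcal{M}(T)}I_k(w)$.

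Next I would evaluate $I_k(w)$ for each $w$. Let $u_1,u_2$ denote the two terminal vertices of $w$; the identity $\varsigma(w)=l(u_1,w)+l(u_2,w)=k$ forces $l(w)=\min\{l(u_1,w),l(u_2,w)\}\le\lfloor k/2\rfloor$, so only the first branch of the definition of $I_k(w)$ is relevant, and using $\ter(w)-1=1$ it collapses to $I_k(w)=(k-l(w))+l(w)=k$. Summing yields $\dim_k(T)=k\,|\mathcal{M}(T)|$, and Remark \ref{remTauk} then gives $|\mathcal{D}_k(T)|\le k\,|\mathcal{M}(T)|$.

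For the matching lower bound I would exhibit, for each $w\in\mathcal{M}(T)$ with terminal vertices $u_1,u_2$, the neighbours $u_1',u_2'$ of $w$ on the paths $P(u_1,w)$ and $P(u_2,w)$ respectively (taking $u_i'=u_i$ when $l(u_i,w)=1$). A routine distance computation, of exactly the same flavour as the one in the proof of Theorem \ref{coroKcota}, identifies $\mathcal{D}_T(u_1',u_2')=V(P(u_1,w,u_2))-\{w\}$, a set of exactly $\varsigma(w)=k$ vertices, so this set is contained in $\mathcal{D}_k(T)$. The main obstacle is then to verify that the sets $V(P(u_1,w,u_2))-\{w\}$ are pairwise disjoint as $w$ ranges over $\mathcal{M}(T)$. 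I would argue as follows: every interior vertex of such a path has degree $2$ in $T$ (otherwise it would be a major vertex on a path between a terminal and its associated major vertex, contradicting the definition of terminal vertex), and a shared vertex $z$ between two distinct branches, together with the tree structure around $z$, would force either the path $P(u_i,w)$ to pass through $w'$ or the path $P(u_j'',w')$ to pass through $w$, again contradicting the definition of terminal vertex. Combining this disjointness with the $k$ contributed vertices per $w$ yields $|\mathcal{D}_k(T)|\ge k\,|\mathcal{M}(T)|$, and together with the upper bound this completes the proof.
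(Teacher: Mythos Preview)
Your proof is correct and follows essentially the same route as the paper's: both establish $\dim_k(T)=k\,|\mathcal{M}(T)|$ via Theorems~\ref{theoTreeDimK} and~\ref{theoTreeDimR} (with $I_k(w)=k$ since $\ter(w)=2$), and both identify the union of the branch sets $V(P(u_1,w,u_2))-\{w\}$ as the relevant part of $\mathcal{D}_k(T)$, using their pairwise disjointness. The only minor difference is that the paper computes $|\mathcal{D}_k(T)|=k\,|\mathcal{M}(T)|$ directly by characterising \emph{all} pairs $x,y$ with $|\mathcal{D}_T(x,y)|=k$, whereas you get the upper bound on $|\mathcal{D}_k(T)|$ from Remark~\ref{remTauk} and need only exhibit one pair $(u_1',u_2')$ per $w$ for the lower bound---a slight economy.
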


\begin{proof}
$\ $Since every vertex $w\in \mathcal{M}(T)$ satisfies that $\ter(w)=2$ and $\varsigma(w)=k$, we have that $\varsigma(T)=k$. Thus, by Theorem \ref{theoTreeDimK}, $T$ is $k$-metric dimensional tree. Since $I_k(w)=k$ for every $w\in \mathcal{M}(T)$, by Theorem \ref{theoTreeDimR} we have that $\dim_{k}(T)=k|\mathcal{M}(T)|$. Let $u_{r}, u_{s}$ be the terminal vertices of $w$. As we have shown in the proof of Theorem \ref{theoTreeDimK}, for every pair $x,y\in V(T)$ such that $x\notin V\left(P(u_{r},w,u_{s})\right)-\{w\}$ or $y\notin V\left(P(u_{r},w,u_{s})\right)-\{w\}$, it follows that  $x,y$ are distinguished by at least   $k+1$ vertices of $T$   and so   $\vert{\cal D}_T^*(x,y)\vert > k-2$. Hence, if $\vert{\cal D}_T^*(x,y)\vert = k-2$, then $x,y\in V\left(P(u_{r},w,u_{s})\right)-\{w\}$ for some $w\in \mathcal{M}(T)$. If $d(x,w)\ne d(y,w)$, then $x,y$ are distinguished by more than $k$ vertices (those vertices not in $V\left(P(u_{r},w,u_{s})\right)-\{w\}$). Thus, if $\vert{\cal D}_T^*(x,y)\vert = k-2$, then $d(x,w)=d(y,w)$ and, as a consequence,  ${\cal D}^*_T(x,y)=V\left(P(u_{r},w,u_{s})\right)-\{x,y,w\}$. Considering that $\left|V\left(P(u_{r},w,u_{s})\right)-\{w\}\right|=k$ and at the same time  that $\displaystyle\bigcap_{w\in \mathcal{M}(T)}V\left(P(u_{r},w,u_{s})\right)=\emptyset$, we deduce $\vert{\cal D}_{k}(T)\vert=k|\mathcal{M}(T)|$. Therefore, $\dim_{k}(T)=\vert{\cal D}_{k}(T)\vert$.
\end{proof}

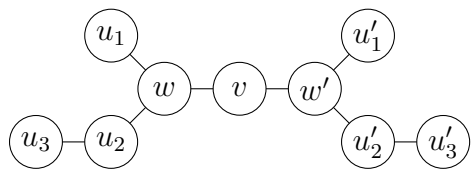
\begin{figure}[h]
\centering
\begin{tikzpicture}[transform shape, inner sep = 1pt, outer sep = 0pt, minimum size = 20pt]
\node [draw=black, shape=circle, fill=white,text=black] (w1) at (-1,0) {$w$};
\node [draw=black, shape=circle, fill=white,text=black] (u11) at ([shift=({135:1})]w1) {$u_{1}$};
\node [draw=black, shape=circle, fill=white,text=black] (u12) at ([shift=({225:1})]w1) {$u_{2}$};
\node [draw=black, shape=circle, fill=white,text=black] (u13) at ([shift=({-1,0})]u12) {$u_{3}$};
\node [draw=black, shape=circle, fill=white,text=black] (w2) at (1,0) {$w'$};
\node [draw=black, shape=circle, fill=white,text=black] (u21) at ([shift=({45:1})]w2) {$u_{1}'$};
\node [draw=black, shape=circle, fill=white,text=black] (u22) at ([shift=({315:1})]w2) {$u_{2}'$};
\node [draw=black, shape=circle, fill=white,text=black] (u23) at ([shift=({1,0})]u22) {$u_{3}'$};
\node [draw=black, shape=circle, fill=white,text=black] (v) at (0,0) {$v$};
\draw[black] (u13) -- (u12)  -- (w1) -- (v) -- (w2) -- (u22) -- (u23);
\draw[black] (w1) -- (u11);
\draw[black] (w2) -- (u21);
\end{tikzpicture}
\caption{A $3$-metric dimensional tree $T$ for which $\dim_{3}(T)=\vert{\cal D}_3(T)\vert=6$.}
\label{figTree3D}
\end{figure}

Figure \ref{figTree3D} shows an  example of a $3$-metric dimensional tree.   In this case $\mathcal{M}(T)=\{w,w'\}$, $\ter(w)=\ter(w')=2$ and $\varsigma(w)=\varsigma(w')=3$. Then Proposition \ref{propTauk} leads to $\dim_{3}(T)=\vert{\cal D}_3(T)\vert=|\{u_{1},u_{2},u_{3},u_{1}',u_{2}',u_{3}'\}|=6$.

\end{document}